\pdfoutput=1  
\documentclass[letterpaper,10pt]{article}
\usepackage{fullpage}

\usepackage{graphicx}%
\usepackage{multirow}%
\usepackage{amsmath,amssymb,amsfonts}%
\usepackage{amsthm}%
\usepackage{mathrsfs}%
\usepackage[title]{appendix}%
\usepackage{xcolor}%
\usepackage{textcomp}%
\usepackage{manyfoot}%
\usepackage{booktabs}%
\usepackage{algpseudocode}%
\usepackage{listings}%
\usepackage{caption}
\usepackage[authoryear,round]{natbib}
\setcitestyle{authoryear,round,citesep={;},aysep={,},yysep={;}}
\usepackage{hyperref}
\usepackage[capitalize,nameinlink,noabbrev]{cleveref}
\usepackage{mathtools}

\usepackage{comment}
\usepackage{siunitx}
\usepackage{relsize}
\usepackage{ifthen}
\usepackage[colorinlistoftodos]{todonotes}

\usepackage[caption=false]{subfig}

\usepackage[vlined,ruled,linesnumbered]{algorithm2e}
\usepackage{graphics} 
\usepackage{rotating}
\usepackage{color}
\usepackage{enumerate}
\usepackage[T1]{fontenc}
\usepackage{psfrag}
\usepackage{epsfig} 
\usepackage{booktabs}
\usepackage{graphicx,url}
\usepackage{multirow}
\usepackage{array}
\usepackage{latexsym}
\usepackage{amsfonts}
\usepackage{amsmath}
\usepackage{amssymb}
\usepackage{mathtools}
\usepackage{xstring}
\usepackage{multirow}
\usepackage{xcolor}
\usepackage{prettyref}
\usepackage{bigdelim}
\usepackage{listings}

\usepackage{enumitem}
\usepackage{xspace}
\usepackage{bm}
\usepackage{tikz}
\usetikzlibrary{matrix,calc}

\usepackage{amsmath,array,arydshln,xparse}
\usepackage{adjustbox}

\usepackage{mdwlist}

\makecompactlist{itemize}{stditemize}

\newrefformat{prob}{Problem\,\ref{#1}}
\newrefformat{def}{Definition\,\ref{#1}}
\newrefformat{sec}{Section\,\ref{#1}}
\newrefformat{sub}{Section\,\ref{#1}}
\newrefformat{prop}{Proposition\,\ref{#1}}
\newrefformat{app}{Appendix\,\ref{#1}}
\newrefformat{alg}{Algorithm\,\ref{#1}}
\newrefformat{cor}{Corollary\,\ref{#1}}
\newrefformat{thm}{Theorem\,\ref{#1}}
\newrefformat{lem}{Lemma\,\ref{#1}}
\newrefformat{fig}{Fig.\,\ref{#1}}
\newrefformat{tab}{Table\,\ref{#1}}

\newcommand{\bdmath}{\begin{dmath}}
\newcommand{\edmath}{\end{dmath}}
\newcommand{\beq}{\begin{equation}}
\newcommand{\eeq}{\end{equation}}
\newcommand{\bdm}{\begin{displaymath}}
\newcommand{\edm}{\end{displaymath}}
\newcommand{\bea}{\begin{eqnarray}}
\newcommand{\eea}{\end{eqnarray}}
\newcommand{\beal}{\beq \begin{array}{ll}}
\newcommand{\eeal}{\end{array} \eeq}
\newcommand{\beas}{\begin{eqnarray*}}
\newcommand{\eeas}{\end{eqnarray*}}
\newcommand{\ba}{\begin{array}}
\newcommand{\ea}{\end{array}}
\newcommand{\bit}{\begin{itemize}}
\newcommand{\eit}{\end{itemize}}
\newcommand{\ben}{\begin{enumerate}}
\newcommand{\een}{\end{enumerate}}

\newcommand{\hide}[1]{}

\newcommand{\hiddenText}{{\color{gray} hidden text.}}
\newcommand{\hideWithText}[1]{\hiddenText}

\newcommand{\diag}[1]{\mathrm{diag}\left(#1\right)}

\newcommand{\blue}[1]{{\color{blue}#1}}

\newcommand{\linkToPdf}[1]{\href{#1}{\blue{(pdf)}}}
\newcommand{\linkToPpt}[1]{\href{#1}{\blue{(ppt)}}}
\newcommand{\linkToCode}[1]{\href{#1}{\blue{(code)}}}
\newcommand{\linkToWeb}[1]{\href{#1}{\blue{(web)}}}
\newcommand{\linkToVideo}[1]{\href{#1}{\blue{(video)}}}
\newcommand{\linkToMedia}[1]{\href{#1}{\blue{(media)}}}
\newcommand{\award}[1]{\xspace} 

\hypersetup{colorlinks, hypertexnames=false, pageanchor=true,
            linkcolor=blue, citecolor={green!50!black}, urlcolor=cyan,
            pdftitle={Global Extendability of Local Atoms in Sparse Moment Relaxations}
            }
\makeatletter
\AddToHook{cmd/appendix/before}{\def\cref@section@alias{appendix}}
\makeatother
\mathtoolsset{centercolon}
\crefname{assumption}{Assumption}{Assumptions}

\theoremstyle{plain}%
\newtheorem{theorem}{Theorem}%
\newtheorem{lemma}[theorem]{Lemma}%
\newtheorem{proposition}[theorem]{Proposition}%
\newtheorem{corollary}[theorem]{Corollary}%

\theoremstyle{remark}%

\theoremstyle{definition}%
\newtheorem{definition}{Definition}%
\newtheorem{example}{Example}%

\graphicspath{{figs/}}

\title{Global Extendability of Local Atoms in Sparse Moment Relaxations}
\author{%
  Haoran Sun\thanks{School of Engineering and Applied Sciences, Harvard University. Email: haoran\_sun@seas.harvard.edu}
}
\date{}  

\begin{document}

\maketitle

\begin{abstract}
Moment relaxations provide lower bounds for polynomial optimization problems, while their moment matrices can also carry information about solutions.
We study whether atomic candidates extracted from clique blocks reveal part of a global minimizer in moment relaxations with correlative sparsity.
We give sufficient conditions for global extendability. If the relaxation is value-exact and the moment matrix of a selected clique has maximum rank over the optimal face and satisfies simplicial regularity, then at least one of its atomic candidates is globally extendable.
When this maximum rank is one, we derive a direct certificate for global extendability.
Further examples demonstrate the necessity of value exactness for global extendability.
Numerical experiments on unicycle trajectory optimization successfully yield near-optimal solutions through progressive extraction and fixing, demonstrating the practical effectiveness of the proposed approach.
\end{abstract}

\section{Introduction}
\label{sec:introduction}

Global polynomial optimization with Lasserre's hierarchy of moment and sums-of-squares (SOS) relaxations~\citep{lasserre2001} plays an increasingly important role in control and robotics, from nonlinear optimal control~\citep{lasserre2007control} to trajectory optimization and contact-rich planning~\citep{kang2024strom,kang2025spot}. 
The hierarchy generates a sequence of convex semidefinite programs (SDPs) whose optimal values are nondecreasing \emph{lower bounds} on the global minimum. 
In many practical questions, the structure of the problem often have correlative sparsity, which can be exploited to reduce the size of the SDPs~\citep{waki2005,lasserre2006sparse,magron2021tssos}. It replaces the large single moment matrix by many small ones indexed by variable cliques.
In these applications, however, a bound is not enough: one needs a \emph{solution}, that is, a point that attains the optimal value. 
For dense moment relaxations, flat truncation is a classical certificate for minimizer extraction. More recently, simplicial regularity of the pseudo-moment cone has provided another route to atomic decomposition~\citep{kang2026simplicial}. In a sparse relaxation, however, only selected clique blocks may satisfy the extraction condition. We ask what these blocks can reveal about global minimizers without requiring the remaining blocks to satisfy the same condition.

\paragraph{Problem statement.}
We consider polynomial optimization problems (POPs) with correlative sparsity: the objective is a sum of terms, and each term and each constraint involves only a few of the variables. Such a POP takes the form
\begin{equation}
    f^\star=\min_{x\in\mathbb R^n}\ \Bigl\{\,\sum_{\ell=1}^{p}f_\ell(x_{\Delta_\ell})
    \ :\ x_{\Delta_\ell}\in K_\ell,\ \ell\in[p]\,\Bigr\},
    \label{eq:intro-pop}
\end{equation}
where the \emph{cliques} $\Delta_1,\ldots,\Delta_p\subseteq[n]$ are small, overlapping sets of variables, each $f_\ell$ is a polynomial in $x_{\Delta_\ell}$, and each $K_\ell$ is a basic semialgebraic set described by polynomial constraints in $x_{\Delta_\ell}$. Let $X^\star$ denote the set of global minimizers. Trajectory optimization is a prime example, because the discretized dynamics couple only consecutive time steps (Example~\ref{ex:chain}). The sparse moment relaxation of~\eqref{eq:intro-pop} at order $\kappa$ optimizes over a sequence of pseudo-moments $y$ with one moment matrix $M_\kappa^{\Delta_\ell}(y)$ per clique, and overlapping cliques share the moments of their common variables (§\ref{subsec:sparse-moments}). Its optimal value $\rho_\kappa$ is a lower bound on $f^\star$, and we call the relaxation \emph{value exact} if $\rho_\kappa=f^\star$.

Suppose that an optimal $y$ has a clique block $M_\kappa^\Delta(y)$ satisfying simplicial regularity (Definition~\ref{def:simplicial-face}): its minimal face in the pseudo-moment matrix cone is simplicial and generated by finite point evaluations.
The clique moments through degree $2\kappa$ then have a unique atomic decomposition. We call its points \emph{atomic candidates}; if every candidate satisfies the assigned clique constraints, the decomposition is a locally feasible atomic representation (Definition~\ref{def:local-representation}).
Neither the decomposition nor local feasibility alone ensures that a candidate $a$ is part of a global minimizer.
We call $a$ \emph{globally extendable} if $a\in\pi_\Delta(X^\star)$, that is, if some global minimizer agrees with $a$ on $\Delta$. 
Fixing $x_\Delta=a$ then retains at least one global minimizer. 

\paragraph{Extraction methods.}
Classical dense extraction uses flat truncation~\citep{nie2013flat}, followed by multiplication matrices and eigenvalue decompositions~\citep{henrion2005,klep2018}. For sparse relaxations~\citep{waki2005,lasserre2006sparse}, a classical sufficient condition combines flatness of every clique block with rank one of every overlap block~\citep[Theorem~3.7]{lasserre2006sparse}. Simplicial regularity instead yields atomic candidates from the facial geometry of a selected block, without a flatness assumption~\citep{kang2026simplicial}. When all clique blocks admit locally feasible atomic representations and their overlaps have rank one, the candidates can likewise be assembled into global minimizers (§\ref{subsec:flat-truncation}). Such a criterion still requires extraction on every clique; our focus is what can be certified when only selected cliques satisfy simplicial regularity.

\begin{quote}\itshape
If only selected clique blocks of an optimal sparse moment solution satisfy simplicial regularity, under what additional conditions do their atomic candidates extend to global minimizers?
\end{quote}

\paragraph{Contributions.}
In this paper, we show that simplicial regularity alone is not enough and identify what is missing. The key observation is that a clique interacts with the rest of the problem only through the variables it shares with other cliques, which we call its \emph{boundary}. We summarize our contributions as follows.
\begin{enumerate}[label=(\Roman*),leftmargin=*]
    \item \textbf{A selected simplicially regular clique can reveal part of a global minimizer.} A value-exact chain example (§\ref{sec:partial-flatness}) has a rank-one simplicially regular clique whose unique candidate agrees with every global minimizer, although the endpoint blocks are not simplicially regular. Conversely, Example~\ref{ex:gap-rank-one} gives a two-clique problem with an arbitrarily small positive relaxation gap whose selected rank-one clique has a locally feasible but non-extendable candidate. Example~\ref{ex:exact-rank-two} at $\theta=0$ demonstrates the same failure under value exactness. Thus value exactness and simplicial regularity at a single optimal solution do not guarantee globally extendable candidates.
    \item \textbf{Existence guarantees over the optimal face.} For a value-exact relaxation, we show that if a clique block has maximum rank over the optimal face of the relaxation and satisfies simplicial regularity, in the sense of Definition~\ref{def:simplicial-face}, then at least one of its atomic candidates is the projection of a global minimizer. This candidate is necessarily locally feasible. The rank-one case over the optimal face pins down the unique globally extendable assignment.
    \item \textbf{Atomic replacement and a boundary rank-one certificate.} Under the stated sparsity assumptions, every positive-weight atom in a locally feasible representation of an optimal clique block through the full relaxation degree minimizes the clique objective conditional on its boundary value. Simplicial regularity supplies such a representation when all candidates satisfy the assigned constraints. Consequently, an atom is globally extendable as soon as its boundary value occurs in a global minimizer. When the relaxation is value exact and the boundary moment block has maximum rank one over the whole optimal face, every extracted atom of the clique is globally extendable.
    Four examples (§\ref{sec:extendability-examples}) illustrate the results and show why maximum rank, value exactness, and the boundary rank-one condition are needed.An auxiliary SDP with the same semidefinite blocks as the relaxation tests the rank-one hypothesis over the optimal face (§\ref{sec:unicycle}).
\end{enumerate}

\paragraph{Paper organization.}
We review dense and sparse moment relaxations and atomic solution extraction in §\ref{sec:preliminaries}, retaining flat truncation as classical background. We present the motivating example in §\ref{sec:partial-flatness}. In §\ref{sec:global-extendability}, we define simplicial regularity, global extendability, and clique boundaries, and establish sufficient conditions for extending atomic candidates to global minimizers.
In §\ref{sec:extendability-examples}, four examples illustrate these conditions and the non-extendable atoms that arise when a hypothesis fails.
In §\ref{sec:unicycle}, we give the auxiliary optimal-face test and apply progressive extraction and fixing to a unicycle trajectory problem. The numerical procedure recovers feasible, near-optimal trajectories, with the exact theoretical guarantees stated separately from the numerical tests.

\paragraph{Notation.}
For a positive integer $n$, let $[n]:=\{1,\ldots,n\}$. Let $x=(x_1,\ldots,x_n)$, and let $\mathbb R[x]$ be the set of real polynomials in $x$, with $\mathbb R[x]_d$ the subset of polynomials of degree at most $d$. For $\alpha\in\mathbb N^n$, write $x^\alpha=x_1^{\alpha_1}\cdots x_n^{\alpha_n}$ and $|\alpha|=\sum_i\alpha_i$. Let $\mathbb N_d^n=\{\alpha\in\mathbb N^n:|\alpha|\leq d\}$, and let $v_d(x)=(x^\alpha)_{\alpha\in\mathbb N_d^n}$ be the standard monomial basis. For $J\subseteq[n]$, let $x_J=(x_i)_{i\in J}$ be the corresponding subvector, $\pi_J$ the coordinate projection, and $\mathbb N_d^J=\{\alpha\in\mathbb N_d^n:\alpha_i=0\text{ for }i\notin J\}$. We write $A\succeq0$ if a symmetric matrix $A$ is positive semidefinite. For a set $K$, $\mathcal P(K)$ denotes the Borel probability measures supported on $K$, $\delta_u$ the Dirac measure at $u$, and $(\pi_J)_\#\mu$ the marginal of a measure $\mu$ on the coordinates $J$.

\section{Moment relaxations and solution extraction}
\label{sec:preliminaries}

In this section, we review how Lasserre's moment hierarchy relaxes a polynomial optimization problem (POP) into a semidefinite program (SDP), first in its dense form (§\ref{subsec:dense-moments}) and then with correlative sparsity (§\ref{subsec:sparse-moments}), together with the interpretation of each relaxation in terms of probability measures. We recall classical flat truncation and explain the atomic representation supplied by simplicial regularity (§\ref{subsec:flat-truncation}), which is the extraction condition used in Section~\ref{sec:global-extendability}. Throughout we work with the moment side of the hierarchy. Formally, we consider the POP
\begin{equation}
    f^\star=\min_{x\in K} f(x),\qquad
    K=\left\{x\in\mathbb R^n:
       g_j(x)\geq0\ (j=1,\ldots,m),\quad
       h_j(x)=0\ (j=1,\ldots,q)\right\},
    \label{eq:pop}
\end{equation}
where $f,g_j,h_j$ are real polynomials. We assume that $K$ is nonempty and compact, and write $X^\star=\{x\in K:f(x)=f^\star\}$ for its (nonempty) set of global minimizers.

\subsection{Dense moment relaxations}
\label{subsec:dense-moments}

\paragraph{Riesz functional and moment matrices.}
A truncated moment sequence $y=(y_\alpha)_{\alpha\in\mathbb N_{2\kappa}^n}$ defines the Riesz linear functional
\begin{equation}
    L_y(p)=\sum_\alpha p_\alpha y_\alpha
    \qquad\text{for } p(x)=\sum_\alpha p_\alpha x^\alpha\in\mathbb R[x]_{2\kappa}.
    \label{eq:moment-functional}
\end{equation}
$L_y$ replaces every monomial $x^\alpha$ by the number $y_\alpha$, so that polynomials become linear functions of $y$. Applied entrywise to polynomial matrices, it yields the moment and localizing matrices
\begin{equation}
    M_d(y)=L_y\bigl(v_dv_d^\top\bigr)
           =(y_{\alpha+\beta})_{\alpha,\beta\in\mathbb N_d^n},
    \qquad
    M_d(p\,y)=L_y\bigl(p\,v_dv_d^\top\bigr),
    \qquad 2d+\deg p\leq2\kappa.
    \label{eq:moment-localizing}
\end{equation}
For instance, for $n=3$, $M_1(y)$ is the $4\times4$ matrix indexed by $(1,x_1,x_2,x_3)$, and its entry in row $x_1$ and column $x_3$ is $y_{1,0,1}=L_y(x_1x_3)$. In general, $M_d(y)$ has size $\binom{n+d}{d}$. Put $d(p)=\lceil\deg(p)/2\rceil$, with $d(p)=0$ for constant polynomials, and define
\begin{equation}
    d_c=\max\bigl(\{1\}\cup\{d(g_j)\}_{j=1}^m
                    \cup\{d(h_j)\}_{j=1}^q\bigr),
    \qquad d_0=\max\{d_c,d(f)\}.
    \label{eq:dense-degree-orders}
\end{equation}

\begin{proposition}[Dense moment hierarchy, \citealt{lasserre2001}]
\label{prop:dense-hierarchy}
For $\kappa\geq d_0$, let $\rho_\kappa^{\mathrm{den}}$ denote the optimal value of
\begin{equation}
\begin{aligned}
    \rho_\kappa^{\mathrm{den}}=\inf_y\quad &L_y(f)\\
    \text{s.t.}\quad &y_0=1,\qquad M_\kappa(y)\succeq0,\\
    &M_{\kappa-d(g_j)}(g_jy)\succeq0,
       &&j=1,\ldots,m,\\
    &L_y(x^\alpha h_j)=0,
       &&|\alpha|\leq2\kappa-\deg h_j,\quad j=1,\ldots,q.
\end{aligned}
\label{eq:dense-relaxation}
\end{equation}
Then $\rho_\kappa^{\mathrm{den}}\leq\rho_{\kappa+1}^{\mathrm{den}}\leq f^\star$. If the constraint description of $K$ is Archimedean (Definition~\ref{def:archimedean}), then $\rho_\kappa^{\mathrm{den}}\uparrow f^\star$ as $\kappa\to\infty$.
\end{proposition}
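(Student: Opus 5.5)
The plan has three parts, proved in order: (i) each $\rho_\kappa^{\mathrm{den}}$ is a lower bound on $f^\star$, (ii) the bounds are nondecreasing in $\kappa$, and (iii) under the Archimedean assumption they converge to $f^\star$.

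\emph{Lower bound.} Take any $x\in K$ and set $y_\alpha=x^\alpha$ for $|\alpha|\le2\kappa$, the truncated moments of $\delta_x$. Then $y_0=1$ and $M_\kappa(y)=v_\kappa(x)v_\kappa(x)^\top\succeq0$. Likewise $M_{\kappa-d(g_j)}(g_jy)=g_j(x)\,v v^\top\succeq0$, because $g_j(x)\ge0$. Finally $L_y(x^\alpha h_j)=x^\alpha h_j(x)=0$. So $y$ is feasible for \eqref{eq:dense-relaxation} with objective value $L_y(f)=f(x)$. Taking the infimum over $x\in K$ gives $\rho_\kappa^{\mathrm{den}}\le f^\star$.

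\emph{Monotonicity.} Let $y$ be feasible at order $\kappa+1$ and let $y'$ be its truncation to $\mathbb N^n_{2\kappa}$. Then:
\begin{itemize}
\item $M_\kappa(y')$ is a principal submatrix of $M_{\kappa+1}(y)$, hence positive semidefinite;
\item $M_{\kappa-d(g_j)}(g_jy')$ is a principal submatrix of $M_{\kappa+1-d(g_j)}(g_jy)$, hence positive semidefinite;
\item the equality constraints at order $\kappa$ form a subset of those at order $\kappa+1$.
\end{itemize}
Since $\kappa\ge d_0$, $f$ involves only moments of degree at most $2\kappa$, so $L_{y'}(f)=L_y(f)$. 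Every order-$(\kappa+1)$ value is therefore attained at order $\kappa$, which gives $\rho_\kappa^{\mathrm{den}}\le\rho_{\kappa+1}^{\mathrm{den}}$.

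\emph{Convergence.} I would argue through the dual SOS problem and Putinar's Positivstellensatz, which is the main obstacle. Weak duality holds between \eqref{eq:dense-relaxation} and the SOS program
\[
\sup\Bigl\{\lambda:\ f-\lambda=\sigma_0+\sum_j\sigma_jg_j+\sum_j\tau_jh_j\Bigr\},
\]
where $\deg\sigma_0\le2\kappa$, $\deg(\sigma_jg_j)\le2\kappa$, and $\deg(\tau_jh_j)\le2\kappa$. Indeed, for any feasible $y$ and any such certificate,
\[
L_y(f)-\lambda=L_y(\sigma_0)+\sum_j L_y(\sigma_jg_j)+\sum_jL_y(\tau_jh_j)\ge0.
\]
Here $L_y(\sigma_0)\ge0$ because $M_\kappa(y)\succeq0$. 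Similarly $L_y(\sigma_jg_j)\ge0$ because the localizing matrices are positive semidefinite, and $L_y(\tau_jh_j)=0$ by the equality constraints. Now fix $\varepsilon>0$. The polynomial $f-f^\star+\varepsilon$ is strictly positive on $K$. The Archimedean assumption lets us invoke Putinar's theorem, treating each $h_j$ as the pair $\pm h_j$, so this polynomial lies in the quadratic module. Hence it has a certificate of some finite degree $2\kappa_\varepsilon$. It follows that $\rho_\kappa^{\mathrm{den}}\ge f^\star-\varepsilon$ for all $\kappa\ge\kappa_\varepsilon$. Combined with the lower bound and monotonicity, this gives $\rho_\kappa^{\mathrm{den}}\uparrow f^\star$.
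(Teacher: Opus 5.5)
Your proposal is correct and follows the paper's route. The paper proves $\rho_\kappa^{\mathrm{den}}\le f^\star$ exactly as you do, by noting that the truncated moments of any $\mu\in\mathcal P(K)$ (your Dirac measures being a special case) are feasible with objective value $\int_K f\,d\mu$, and it defers monotonicity and convergence to \citet{lasserre2001}, whose argument is your principal-submatrix truncation together with weak duality against Putinar certificates. The one step you leave implicit is identifying the paper's $Q(g,h)$, which has arbitrary multipliers $\phi_j$ on $h_j$, with the quadratic module generated by $g_j$ and $\pm h_j$; this follows at once from $\phi=\bigl(\tfrac{\phi+1}{2}\bigr)^2-\bigl(\tfrac{\phi-1}{2}\bigr)^2$.
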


\begin{definition}[Archimedean condition]
\label{def:archimedean}
Let $\Sigma[x]$ denote the cone of sums of squares of polynomials in $x\in\mathbb R^n$. For the constraint description in~\eqref{eq:pop}, the quadratic module generated by the inequalities together with the ideal generated by the equalities is
\begin{equation}
    Q(g,h)=\Bigl\{\sigma_0+\sum_{j=1}^m\sigma_jg_j+\sum_{j=1}^q\phi_jh_j:
       \ \sigma_0,\sigma_j\in\Sigma[x],\ \phi_j\in\mathbb R[x]\Bigr\}.
    \label{eq:quadratic-module}
\end{equation}
Every element of $Q(g,h)$ is nonnegative on $K$. The set $Q(g,h)$ is \emph{Archimedean} if $R-\|x\|^2\in Q(g,h)$ for some $R>0$.
\end{definition}

\paragraph{Measure interpretation.}
The POP~\eqref{eq:pop} is equivalent to the following optimization over probability measures on $K$~\citep{lasserre2001}:
\begin{equation}
    f^\star=\min_{\mu\in\mathcal P(K)}\int_K f(x)\,d\mu(x).
    \label{eq:measure-pop}
\end{equation}
Indeed, every such integral is at least $f^\star$, and the Dirac measure at a global minimizer attains this bound. Since $f-f^\star\geq0$ on $K$, a measure is optimal if and only if it is supported on $X^\star$. In particular, every atom $u_i$ of an optimal finitely atomic measure
\begin{equation}
    \mu=\sum_{i=1}^r\lambda_i\delta_{u_i},\qquad
    \lambda_i>0,\quad \sum_{i=1}^r\lambda_i=1,
    \label{eq:atomic-measure}
\end{equation}
is a global minimizer~\citep{yang2024moment}. The relaxation~\eqref{eq:dense-relaxation} keeps only the moments $y_\alpha=\int_Kx^\alpha\,d\mu$ with $|\alpha|\leq2\kappa$, and its constraints are necessary conditions for $y$ to have such a representing measure $\mu\in\mathcal P(K)$:
(i) $y_0=\mu(K)=1$;
(ii) $M_\kappa(y)=\int_Kv_\kappa v_\kappa^\top\,d\mu\succeq0$ and $M_{\kappa-d(g_j)}(g_jy)=\int_Kg_j\,v_{\kappa-d(g_j)}v_{\kappa-d(g_j)}^\top\,d\mu\succeq0$, since $g_j\geq0$ on $K$;
(iii) $L_y(x^\alpha h_j)=\int_Kx^\alpha h_j\,d\mu=0$, since $h_j=0$ on $K$.
As $L_y(f)=\int_Kf\,d\mu$, the moments of every $\mu\in\mathcal P(K)$ are feasible with the same objective value, therefore $\rho_\kappa^{\mathrm{den}}\leq f^\star$. 

\subsection{Sparse moment relaxations with correlative sparsity}
\label{subsec:sparse-moments}

\paragraph{Correlative sparsity.}
Many POPs involve interactions among only small subsets of variables. 
Following \citet{waki2005} and \citet{yang2024structure}, let $\mathcal C=\{\Delta_1,\ldots,\Delta_p\}$ be a cover of $[n]$ by overlapping \emph{variable cliques} such that the objective function decomposes as
\begin{equation}
    f(x)=\sum_{\ell=1}^p f_\ell(x_{\Delta_\ell}),
    \qquad f_\ell\in\mathbb R[x_{\Delta_\ell}],
    \label{eq:sparse-objective}
\end{equation}
and each constraint polynomial $g_j$ or $h_j$ belongs to $\mathbb R[x_{\Delta_\ell}]$ for at least one $\ell$. 
The cover satisfies the \emph{running intersection property} (RIP) if, after reordering, for every $\ell\geq2$ there is $s<\ell$ with
\begin{equation}
    \Delta_\ell\cap\left(\bigcup_{r<\ell}\Delta_r\right)\subseteq\Delta_s.
    \label{eq:rip}
\end{equation}

\begin{example}[Chain sparsity]
\label{ex:chain}
Consider a trajectory optimization problem over states $s_0,\ldots,s_N$ and controls $u_0,\ldots,u_{N-1}$ that minimizes $\sum_{k=1}^N\ell_k(s_{k-1},u_{k-1},s_k)$ subject to $s_0=s_{\mathrm{init}}$, the discretized dynamics $F_k(s_{k-1},u_{k-1},s_k)=0$, and path constraints $(s_{k-1},u_{k-1},s_k)\in C_k$ for $k\in[N]$, where $\ell_k$, $F_k$, and the inequalities describing $C_k$ are polynomial. 
Identifying each vector variable with the set of its coordinates, the cliques
\begin{equation}
    \Delta_k=\{s_{k-1},u_{k-1},s_k\},\qquad k\in[N],
    \label{eq:chain-cover}
\end{equation}
therefore form a valid cover, and it satisfies RIP in its natural order, since $\Delta_k\cap(\Delta_1\cup\cdots\cup\Delta_{k-1})=\{s_{k-1}\}\subseteq\Delta_{k-1}$.
\end{example}

Assign each constraint to one clique containing its variables, and let $G_\ell$ and $H_\ell$ index the inequalities and equalities assigned to $\Delta_\ell$. The local feasible sets are
\begin{equation}
    K_\ell=
    \left\{
        u\in\mathbb R^{|\Delta_\ell|}:
        g_j(u)\geq0\ (j\in G_\ell),\quad
        h_j(u)=0\ (j\in H_\ell)
    \right\},
    \label{eq:local-feasible-sets}
\end{equation}
so that $K=\{x\in\mathbb R^n:x_{\Delta_\ell}\in K_\ell\text{ for all }\ell\}$.

\paragraph{Sparse moment relaxation.}
The sparse hierarchy keeps only the moments of monomials supported in a single clique: its variable is $y=(y_\alpha)_{\alpha\in\mathcal A_{2\kappa}}$, where
\begin{equation}
    \mathcal A_{2\kappa}
        =\bigcup_{\ell=1}^p
          \mathbb N_{2\kappa}^{\Delta_\ell},
    \label{eq:sparse-moment-indices}
\end{equation}
which defines $L_y$ on $\sum_\ell\mathbb R[x_{\Delta_\ell}]_{2\kappa}$. For $J$ contained in a clique, let $M_d^J(y)=(y_{\alpha+\beta})_{\alpha,\beta\in\mathbb N_d^J}$, and let $M_d^J(g_jy)$ be the restriction of the dense localizing matrix to monomials in $x_J$ whenever $g_j\in\mathbb R[x_J]$. With $d(\cdot)$ as above, define
\begin{equation}
    d_\ell
        =\max\bigl(
            \{1\}\cup\{d(g_j):j\in G_\ell\}
            \cup\{d(h_j):j\in H_\ell\}
          \bigr),
    \qquad
    \kappa_0
        =\max_{1\leq\ell\leq p}
          \{d_\ell,d(f_\ell)\}.
    \label{eq:sparse-degree-orders}
\end{equation}
Let us formalize the hierarchy.

\begin{proposition}[Sparse moment hierarchy, \citealt{waki2005,lasserre2006sparse}]
\label{prop:sparse-hierarchy}
For $\kappa\geq\kappa_0$, let $\rho_\kappa$ denote the optimal value of
\begin{equation}
\begin{aligned}
    \rho_\kappa=\inf_y\quad
        &\sum_{\ell=1}^pL_y(f_\ell)\\
    \text{s.t.}\quad
        &y_0=1,\\
        &M_\kappa^{\Delta_\ell}(y)\succeq0,
            &&\ell=1,\ldots,p,\\
        &M_{\kappa-d(g_j)}^{\Delta_\ell}(g_jy)\succeq0,
            &&j\in G_\ell,\quad\ell=1,\ldots,p,\\
        &L_y(x^\alpha h_j)=0,
            &&\alpha\in\mathbb N_{2\kappa-\deg h_j}^{\Delta_\ell},
              \quad j\in H_\ell,\quad\ell=1,\ldots,p.
\end{aligned}
\label{eq:sparse-relaxation}
\end{equation}
Then $\rho_\kappa\leq\rho_{\kappa+1}$ and $\rho_\kappa\leq\rho_\kappa^{\mathrm{den}}\leq f^\star$. If the cover satisfies RIP and, for each $\ell$, the constraints assigned to $\Delta_\ell$ generate an Archimedean quadratic module in the variables $x_{\Delta_\ell}$ (Definition~\ref{def:archimedean}), then $\rho_\kappa\uparrow f^\star$ as $\kappa\to\infty$~\citep{lasserre2006sparse,yang2024structure}.
\end{proposition}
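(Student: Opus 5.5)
We prove the three claims of Proposition~\ref{prop:sparse-hierarchy} separately: monotonicity in $\kappa$, the comparison $\rho_\kappa\le\rho_\kappa^{\mathrm{den}}\le f^\star$, and convergence under RIP and the clique-wise Archimedean assumption.

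\textbf{Monotonicity.} Take any $y$ feasible for~\eqref{eq:sparse-relaxation} at order $\kappa+1$ and truncate it to $\mathcal A_{2\kappa}$. Each $M_\kappa^{\Delta_\ell}$ and each $M_{\kappa-d(g_j)}^{\Delta_\ell}(g_jy)$ of the truncation is a principal submatrix of the corresponding order-$(\kappa+1)$ matrix, so it remains positive semidefinite. The equality constraints at order $\kappa$ form a subset of those at order $\kappa+1$. The objective only involves moments of degree at most $2d(f_\ell)\le2\kappa$, so it is unchanged. Taking infima gives $\rho_\kappa\le\rho_{\kappa+1}$.

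\textbf{Comparison with the dense relaxation.} Take $y$ feasible for~\eqref{eq:dense-relaxation} at order $\kappa$ and restrict it to $\mathcal A_{2\kappa}\subseteq\mathbb N^n_{2\kappa}$.
\begin{itemize}
\item Each clique moment matrix $M^{\Delta_\ell}_\kappa(y)$ is a principal submatrix of $M_\kappa(y)$.
\item Each clique localizing matrix is a principal submatrix of the dense localizing matrix, since $g_j\in\mathbb R[x_{\Delta_\ell}]$.
\item The clique equality constraints are among the dense ones.
\item The objectives agree because $f=\sum_\ell f_\ell$ and $L_y$ is linear.
\end{itemize}
Hence $\rho_\kappa\le\rho^{\mathrm{den}}_\kappa$. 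The bound $\rho^{\mathrm{den}}_\kappa\le f^\star$ is already part of Proposition~\ref{prop:dense-hierarchy}.

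\textbf{Convergence.} This is the main step; it is the sparse Positivstellensatz of Lasserre (2006). The route is through the dual SOS problem. For $\varepsilon>0$ we have $f-f^\star+\varepsilon>0$ on $K$. Under RIP and with each clique quadratic module Archimedean, the sparse Putinar theorem yields a decomposition
\[
f-f^\star+\varepsilon=\sum_\ell\Bigl(\sigma_{\ell,0}+\sum_{j\in G_\ell}\sigma_{\ell,j}g_j+\sum_{j\in H_\ell}\phi_{\ell,j}h_j\Bigr),
\]
where every multiplier uses only the variables $x_{\Delta_\ell}$. Choose $\kappa$ large enough that all degrees in this certificate are at most $2\kappa$. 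Applying $L_y$ for any feasible $y$ gives $L_y(f)-f^\star+\varepsilon\ge0$: the $\sigma_{\ell,0}$ and $\sigma_{\ell,j}g_j$ terms are nonnegative by the PSD clique matrices, and the $\phi_{\ell,j}h_j$ terms vanish by the equality constraints. So $\rho_\kappa\ge f^\star-\varepsilon$ for all large $\kappa$, and combined with monotonicity and the upper bound this gives $\rho_\kappa\uparrow f^\star$.

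\textbf{Main obstacle.} The hard part is the sparse Positivstellensatz itself. Its proof glues clique-wise representing measures along the RIP ordering, using disintegration and conditional measures on the overlaps, to build a global measure. The alternative proof via Grimm--Netzer--Schweighofer is also nontrivial. I would cite \citet{lasserre2006sparse} for this step rather than reprove it.
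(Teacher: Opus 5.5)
Your proposal is correct and follows the paper. The only argument the paper gives in-text is the one you use for $\rho_\kappa\le\rho_\kappa^{\mathrm{den}}$: restrict a dense feasible sequence to $\mathcal A_{2\kappa}$ and note that the clique moment and localizing matrices are principal submatrices of their dense counterparts. The paper defers monotonicity and convergence to \citet{lasserre2006sparse} and \citet{yang2024structure}. Your truncation argument for $\rho_\kappa\le\rho_{\kappa+1}$, and your reduction of convergence to the cited sparse Putinar representation (taking $\kappa$ large enough that every $\sigma_{\ell,j}g_j$ and $\phi_{\ell,j}h_j$ has degree at most $2\kappa$), are the standard way to fill in those cited steps.
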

Since the clique moment and localizing matrices are principal submatrices of their dense counterparts, restricting a dense feasible sequence to $\mathcal A_{2\kappa}$ gives a sparse feasible one, which proves $\rho_\kappa\leq\rho_\kappa^{\mathrm{den}}$. We assume attainment whenever referring to an optimal moment solution, and call the relaxation \emph{value exact} when $\rho_\kappa=f^\star$.

\paragraph{Measure interpretation.}
Let $S_{\ell r}=\Delta_\ell\cap\Delta_r$. Every $\mu\in\mathcal P(K)$ induces local measures $\mu_\ell=(\pi_{\Delta_\ell})_\#\mu\in\mathcal P(K_\ell)$ whose marginals agree on the overlaps. Conversely, under RIP, every such compatible family is induced by a joint measure supported on $K$~\citep[Lemma 6.4]{lasserre2006sparse}. Hence~\eqref{eq:pop} is equivalent to
\begin{equation}
\begin{aligned}
    f^\star=\min_{\mu_1,\ldots,\mu_p}\quad
        &\sum_{\ell=1}^p
          \int_{K_\ell}f_\ell(u)\,d\mu_\ell(u)\\
    \text{s.t.}\quad
        &\mu_\ell\in\mathcal P(K_\ell),
          \qquad \ell=1,\ldots,p,\\
        &(\pi_{S_{\ell r}})_\#\mu_\ell
          =(\pi_{S_{\ell r}})_\#\mu_r,
          \qquad \ell<r,\quad S_{\ell r}\ne\varnothing.
\end{aligned}
\label{eq:sparse-measure-pop}
\end{equation}
The single measure on $K$ is replaced by one local measure per clique, and the marginal constraints glue the local measures together on the overlaps. 
The sparse relaxation keeps only the moments $y_\alpha=\int u^\alpha\,d\mu_\ell(u)$ with $\alpha\in\mathbb N_{2\kappa}^{\Delta_\ell}$. 
For each clique, its constraints are conditions (i)--(iii) of §\ref{subsec:dense-moments} for the clique subsequence $y|_{\Delta_\ell}=(y_\alpha)_{\alpha\in\mathbb N_{2\kappa}^{\Delta_\ell}}$ and the set $K_\ell$. 
The marginal constraints become the shared indexing, which enforces $\int u^\alpha\,d\mu_\ell=\int u^\alpha\,d\mu_r$ only for $\alpha\in\mathbb N_{2\kappa}^{S_{\ell r}}$.
\subsection{Atomic representations and solution extraction}
\label{subsec:flat-truncation}

We first recall flat truncation as classical background. It certifies that the truncation $y^\star|_{2t}=(y^\star_\alpha)_{|\alpha|\leq2t}$ of an optimal moment solution is the moment sequence of a finitely atomic measure on $K$, from which minimizers can be extracted. We then distinguish this rank condition from the simplicial regularity used for local extraction in the rest of the paper.

\begin{proposition}[Flat truncation, \citealt{nie2013flat}]
\label{prop:flat-truncation}
Let $y^\star$ be an optimal solution of~\eqref{eq:dense-relaxation}, and suppose that there is an integer $t$ with $d_0\leq t\leq\kappa$ such that
\begin{equation}
    \operatorname{rank}M_t(y^\star)
       =\operatorname{rank}M_{t-d_c}(y^\star)=r.
    \label{eq:dense-flat-truncation}
\end{equation}
Then $y^\star|_{2t}$ has an $r$-atomic representing measure~\eqref{eq:atomic-measure} on $K$, the relaxation is exact, and every atom is a global minimizer.
\end{proposition}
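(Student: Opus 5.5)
The plan is to reduce the statement to the flat extension theorem of Curto and Fialkow, applied to the truncation $y^\star|_{2t}$, and then to use the measure formulation~\eqref{eq:measure-pop} to read off exactness and optimality of the atoms. First, set $s=t-d_c$. The flatness condition~\eqref{eq:dense-flat-truncation} gives $\operatorname{rank}M_t(y^\star)=\operatorname{rank}M_{s}(y^\star)$. Since $d_c\geq1$, it also gives $\operatorname{rank}M_{t-1}(y^\star)=\operatorname{rank}M_t(y^\star)$, because the ranks of nested principal blocks of a PSD moment matrix are monotone. We have $M_t(y^\star)\succeq0$, as it is a principal submatrix of $M_\kappa(y^\star)$. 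Hence $M_t(y^\star)$ is a flat extension of $M_{t-1}(y^\star)$. By Curto--Fialkow, $y^\star|_{2t}$ then admits a unique representing measure $\mu=\sum_{i=1}^r\lambda_i\delta_{u_i}$ with exactly $r=\operatorname{rank}M_t(y^\star)$ atoms, $\lambda_i>0$, and $\sum_i\lambda_i=y^\star_0=1$.

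Second, I will show that every atom lies in $K$. For the equalities, take $p=h_j\,x^\alpha$. The relaxation constraints with $|\alpha|\leq 2t-\deg h_j$ give $\sum_i\lambda_i h_j(u_i)q(u_i)=0$ for every $q$ of low degree. I then choose $q=h_j\,\ell_i^2$, where $\ell_i\in\mathbb R[x]_{s}$ are interpolation polynomials with $\ell_i(u_k)=\delta_{ik}$. These exist because $\operatorname{rank}M_s(y^\star)=r$, so the evaluation map $\mathbb R[x]_s\to\mathbb R^r$ at the atoms is surjective. For the inequalities, I plan to use $\ell_i^2$ against the localizing matrix, which gives $0\leq\operatorname{vec}(\ell_i)^\top M_{t-d(g_j)}(g_jy^\star)\operatorname{vec}(\ell_i)=\sum_k\lambda_kg_j(u_k)\ell_i(u_k)^2=\lambda_ig_j(u_i)$. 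This identity uses only moments of degree at most $2t$, since $\deg\ell_i\leq t-d_c\leq t-d(g_j)$. Therefore $g_j(u_i)\geq0$. The equality case is handled in the same way, with $h_j\ell_i^2$ of degree at most $2t$, and gives $\lambda_ih_j(u_i)=0$.

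Third, I will conclude exactness and optimality. Since $t\geq d_0\geq d(f)$, the moments of $f$ lie in $y^\star|_{2t}$, so $\rho_\kappa^{\mathrm{den}}=L_{y^\star}(f)=\sum_i\lambda_if(u_i)$. Each $u_i\in K$, so $f(u_i)\geq f^\star$, and this sum is at least $f^\star$. Together with $\rho_\kappa^{\mathrm{den}}\leq f^\star$ from Proposition~\ref{prop:dense-hierarchy}, this forces equality. Because every $\lambda_i>0$, it also forces $f(u_i)=f^\star$ for each $i$, so every atom is a global minimizer.

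I expect the main obstacle to be the second step, namely ensuring that the interpolation polynomials have low enough degree for every localizing constraint to be tested inside degree $2t$. This is exactly why the statement uses the gap $d_c$ rather than $1$. The argument rests on the rank equality at level $t-d_c$, which makes the atoms separable by polynomials of degree at most $t-d_c$.
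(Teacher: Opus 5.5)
Your proposal is correct and follows the paper's route. The paper obtains the $r$-atomic representing measure on $K$ by citing \citet{nie2013flat}, which you unpack via the Curto--Fialkow flat extension theorem and interpolation polynomials of degree at most $t-d_c$; it then runs the same objective chain~\eqref{eq:dense-extraction-optimality} to get exactness and optimality of every atom. One small slip: for the equalities the multiplier should be $q=\ell_i^2$, so the test polynomial is $h_j\ell_i^2$, not $q=h_j\ell_i^2$, because $h_j^2\ell_i^2$ can exceed degree $2t$ when $\deg h_j>d_c$ and then $\mu$ need not represent it; your later sentence already uses the correct choice.
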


Indeed, condition~\eqref{eq:dense-flat-truncation} and the localizing constraints imply that $y^\star|_{2t}$ is represented by an $r$-atomic measure~\eqref{eq:atomic-measure} supported on $K$~\citep{nie2013flat}. Since $\deg f\leq2t$,
\begin{equation}
    \rho_\kappa^{\mathrm{den}}
      =L_{y^\star}(f)
      =\sum_{i=1}^r\lambda_i f(u_i)
      \geq f^\star
      \geq\rho_\kappa^{\mathrm{den}},
    \label{eq:dense-extraction-optimality}
\end{equation}
so all inequalities hold with equality, and $f(u_i)=f^\star$ for every $i$ since $\lambda_i>0$. In words, flat truncation means that raising the degree of the moment matrix from $t-d_c$ to $t$ reveals no new rank: the optimal moments are then those of an optimal measure~\eqref{eq:atomic-measure}.

\paragraph{Classical sparse extraction.}
The same reasoning applies clique by clique. Let $y$ be feasible for~\eqref{eq:sparse-relaxation}, and let $d_\ell\leq t\leq\kappa$. If
\begin{equation}
    \operatorname{rank}M_t^{\Delta_\ell}(y)
       =\operatorname{rank}M_{t-d_\ell}^{\Delta_\ell}(y)=r_\ell,
    \label{eq:local-flat-truncation}
\end{equation}
then the argument above, applied to $y|_{\Delta_\ell}$ and the constraints assigned to $\Delta_\ell$, shows that the clique moments through degree $2t$ are those of an atomic measure
\begin{equation}
    \mu_\ell=\sum_{i=1}^{r_\ell}\lambda_{\ell i}\delta_{a_{\ell i}},
    \qquad a_{\ell i}\in K_\ell,\quad
    \lambda_{\ell i}>0,\quad\sum_i\lambda_{\ell i}=1.
    \label{eq:local-atomic-measure}
\end{equation}
However, the atoms of different cliques need not agree on their shared variables. The classical sufficient condition solves this by pinning every overlap to a single value.

\begin{proposition}[Sparse extraction, cf.~{\citealt[Theorem 3.7]{lasserre2006sparse}}]
\label{prop:sparse-extraction}
Let $y^\star$ be an optimal solution of~\eqref{eq:sparse-relaxation}, and suppose that, at a common order $t$ with $\kappa_0\leq t\leq\kappa$, every clique satisfies~\eqref{eq:local-flat-truncation} and
\begin{equation}
    \operatorname{rank}M_t^{S_{\ell r}}(y^\star)=1
    \qquad\text{for every nonempty overlap }S_{\ell r}.
    \label{eq:sparse-overlap-rank-one}
\end{equation}
Then $\rho_\kappa=f^\star$, and any choice of one atom $a_{\ell i_\ell}$ of $\mu_\ell$ for each $\ell$ assembles into a global minimizer $x^\star$ with $x^\star_{\Delta_\ell}=a_{\ell i_\ell}$ for all $\ell$.
\end{proposition}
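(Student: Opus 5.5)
The plan is to show first that every clique block is the moment sequence of an atomic measure on its local feasible set, then to glue the selected atoms into a feasible point $x^\star$, and finally to get exactness and global optimality of $x^\star$ from a sandwich argument in the style of~\eqref{eq:dense-extraction-optimality}.

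\emph{Step 1 (local atoms).} Fix $\ell$. By the local flat truncation~\eqref{eq:local-flat-truncation}, applied as in Proposition~\ref{prop:flat-truncation} to $y^\star|_{\Delta_\ell}$ and the constraints in $G_\ell,H_\ell$, the clique moments of degree at most $2t$ are those of the measure $\mu_\ell$ in~\eqref{eq:local-atomic-measure}, with atoms $a_{\ell i}\in K_\ell$.

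\emph{Step 2 (overlap pinning).} Consider a nonempty overlap $S=S_{\ell r}$. Since $t\geq1$, the matrix $M_t^{S}(y^\star)$ contains the rows and columns indexed by $1$ and by $x_k$ for $k\in S$. Because $\mu_\ell$ represents the moments through degree $2t$, we have
\[
M_t^{S}(y^\star)=\sum_i\lambda_{\ell i}\,v_t(\pi_S a_{\ell i})v_t(\pi_S a_{\ell i})^\top .
\]
This matrix has rank one, and the $(1,1)$ entry equals $y_0=1$. All weights $\lambda_{\ell i}$ are positive, so every vector $v_t(\pi_S a_{\ell i})$ must be a multiple of one fixed vector. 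Since each of these vectors has first entry $1$, they are all equal. It follows that $\pi_S a_{\ell i}=(y^\star_{e_k})_{k\in S}$ for every $i$. The same argument applied to $\mu_r$ gives $\pi_S a_{r j}=(y^\star_{e_k})_{k\in S}$ for every $j$. Hence any atom of $\mu_\ell$ and any atom of $\mu_r$ coincide on $S$.

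\emph{Step 3 (assembly and optimality).} Choose one index $i_\ell$ for each $\ell$. By Step 2 the atoms $a_{\ell i_\ell}$ are pairwise consistent on every overlap. Since the cliques cover $[n]$, there is a well-defined $x^\star$ with $x^\star_{\Delta_\ell}=a_{\ell i_\ell}$ for all $\ell$; this step does not even require RIP. Then $x^\star_{\Delta_\ell}\in K_\ell$ for every $\ell$, so $x^\star\in K$.

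For optimality, note that $\deg f_\ell\leq2\kappa_0\leq2t$. We therefore get
\[
\rho_\kappa=\sum_\ell L_{y^\star}(f_\ell)=\sum_\ell\sum_i\lambda_{\ell i}f_\ell(a_{\ell i}).
\]
Step 2 shows more than consistency of the chosen atoms: \emph{every} combination of atom choices gives a feasible point. So let $\mathbf i=(i_1,\dots,i_p)$ be drawn from the product weights $\prod_\ell\lambda_{\ell i_\ell}$, and average the feasible points $x^{\mathbf i}$ with these weights. Because the product weights have marginals $\lambda_{\ell i}$, this gives
\[
\rho_\kappa=\sum_{\mathbf i}\Bigl(\prod_\ell\lambda_{\ell i_\ell}\Bigr)f(x^{\mathbf i})\geq f^\star\geq\rho_\kappa .
\]
Equality must hold throughout, so $\rho_\kappa=f^\star$. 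Each product weight is positive, which forces $f(x^{\mathbf i})=f^\star$ for every $\mathbf i$, and this is the claim.

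I expect Step 2 to be the main obstacle, specifically its justification. The representation of the overlap moments by $\mu_\ell$ holds only through degree $2t$, but that suffices, since $M_t^{S}$ involves only moments of degree at most $2t$ supported in $\Delta_\ell$.
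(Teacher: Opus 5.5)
Your proposal is correct and follows essentially the argument the paper itself sketches. The paper gives no formal proof beyond citing Lasserre; after introducing simplicial regularity it only notes that ``zero overlap variances force agreement of the atoms, and averaging the assembled objective values gives $\rho_\kappa$''. Your Step~2 establishes that agreement through the range of a rank-one positive semidefinite sum, where the paper would use zero variance as in the proof of Theorem~\ref{thm:boundary-certificate}, and your product-weight averaging in Step~3 is exactly the averaging that closes the sandwich with $f^\star$.
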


Proposition~\ref{prop:sparse-extraction} requires every clique to be flat. It does not address what can be inferred from an individual clique when the remaining blocks fail this condition.

\begin{definition}[Partial flatness]
\label{def:partial-flatness}
Let $y$ be feasible for~\eqref{eq:sparse-relaxation} and let $d_\ell\leq t\leq\kappa$. We say that $y$ is \emph{flat on} $\Delta_\ell$ at order $t$ if the flat truncation condition~\eqref{eq:local-flat-truncation} holds for this clique, regardless of the other cliques. We say that $y$ is \emph{partially flat} if it is flat on some, but not necessarily all, of the cliques.
\end{definition}

If $y$ is flat on $\Delta_\ell$ at order $t$, then the moments of $y$ on $\Delta_\ell$ through degree $2t$ are those of an atomic measure~\eqref{eq:local-atomic-measure} supported on $K_\ell$.

\paragraph{Atomic extraction under simplicial regularity.}
Our local extraction condition is instead simplicial regularity, whose precise facial definition is given in Definition~\ref{def:simplicial-face}. For a normalized block $M_\kappa^{\Delta_\ell}(y)$ satisfying this condition, the minimal face in the pseudo-moment matrix cone is generated by linearly independent finite point-evaluation matrices. Consequently, there are distinct atomic candidates $a_{\ell i}$ and unique weights $\lambda_{\ell i}>0$, with $\sum_i\lambda_{\ell i}=1$, such that
\begin{equation}
    y_\alpha=\sum_i\lambda_{\ell i}a_{\ell i}^\alpha,
    \qquad \alpha\in\mathbb N_{2\kappa}^{\Delta_\ell}.
    \label{eq:local-simplicial-decomposition}
\end{equation}
This gives a full-degree atomic decomposition without a flatness assumption~\citep{kang2026simplicial}. Local feasibility must be checked separately: if every $a_{\ell i}\in K_\ell$, the resulting measure has the form~\eqref{eq:local-atomic-measure}. Simpliciality alone, without generation by finite point evaluations, is not sufficient. For an optimal dense moment solution, a decomposition with all candidates in $K$ gives global minimizers by the objective argument~\eqref{eq:dense-extraction-optimality}. Likewise, for an optimal sparse moment solution, the assembly conclusion of Proposition~\ref{prop:sparse-extraction} holds when every clique has a full-degree locally feasible atomic representation and every overlap block has rank one: zero overlap variances force agreement of the atoms, and averaging the assembled objective values gives $\rho_\kappa$. A locally feasible decomposition on one clique, however, does not by itself certify global extendability.

\begin{definition}[Locally feasible atomic representation]
\label{def:local-representation}
A measure of the form~\eqref{eq:local-atomic-measure}, with $r_\ell$ denoting its number of atoms, that represents the moments of $y$ on $\Delta_\ell$ through degree $2t$ is a \emph{locally feasible atomic representation} of $y$ on $\Delta_\ell$. Its atoms $a_{\ell i}$ are the \emph{extracted atoms}.
\end{definition}

In Section~\ref{sec:global-extendability} and the applications of its certificates, we use this definition with $t=\kappa$: the measure must represent every clique moment through degree $2\kappa$. Simplicial regularity supplies this representation when all atomic candidates are locally feasible. Discarding an infeasible candidate generally destroys the moment representation. The remaining question is when a candidate, or every atom of a locally feasible representation, extends to a global minimizer.

\section{From a motivating example}
\label{sec:partial-flatness}

The classical sparse extraction criterion in Proposition~\ref{prop:sparse-extraction} requires all clique moment matrices to be flat.
We instead focus on atomic extraction from selected blocks under simplicial regularity (Definition~\ref{def:simplicial-face}).
The following example shows that a single such block can identify part of a global minimizer, even when the remaining blocks are not simplicially regular. It raises the question: under what conditions can an atomic candidate extracted from an individual simplicially regular clique be extended to a global minimizer?

\begin{example}
\label{ex:chain-partial-flatness}
Consider the polynomial optimization problem
\begin{equation}
\begin{aligned}
    f^\star=\min_{x\in[-1,1]^4}\ f(x),\qquad
      &f(x)=f_1(x_1,x_2)+f_2(x_2,x_3)+f_3(x_3,x_4),\\
    f_1=(x_1x_2)^2,\qquad
      &f_2=x_2^2+(x_2-x_3)^2,\qquad
      f_3=(x_3x_4)^2,
\end{aligned}
\label{eq:chain-pop}
\end{equation}
with the cliques
\begin{equation}
    \Delta_1=\{x_1,x_2\},\qquad
    \Delta_2=\{x_2,x_3\},\qquad
    \Delta_3=\{x_3,x_4\}.
    \label{eq:chain-cliques}
\end{equation}
Assign each box constraint $1-x_j^2\geq0$ to every clique containing $x_j$. The cover is a chain and satisfies RIP: its only nonempty overlaps are $S_{12}=\{x_2\}$ and $S_{23}=\{x_3\}$. 
All constraints have degree two, so $d_\ell=1$, and the quartic objective gives $\kappa_0=2$.
\end{example}

The objective is a sum of squares. It vanishes if and only if $x_2=x_3=0$, while $x_1$ and $x_4$ can be chosen arbitrarily in $[-1,1]$. Thus
\begin{equation}
    f^\star=0,\qquad
    X^\star=\{(s,0,0,t):s,t\in[-1,1]\}.
    \label{eq:chain-minimizers}
\end{equation}
At relaxation order $\kappa=2$, let $U,V$ be independent and uniform on $[-1,1]$, and consider the normalized functional
\begin{equation}
    L(p)=\mathbb E\bigl[p(U,0,0,V)\bigr]
        =\frac14\int_{-1}^1\!\!\int_{-1}^1p(s,0,0,t)\,ds\,dt,
    \qquad p\in\sum_{\ell=1}^3\mathbb R[x_{\Delta_\ell}]_4.
    \label{eq:chain-functional}
\end{equation}
Write $\nu$ for the uniform probability measure on $[-1,1]$. The clique representing measures of $L$ are
\begin{equation}
    \mu_1=\nu\otimes\delta_0,\qquad
    \mu_2=\delta_{(0,0)},\qquad
    \mu_3=\delta_0\otimes\nu.
    \label{eq:chain-local-measures}
\end{equation}

\begin{proposition}
\label{prop:chain-partial-flatness}
For Example~\ref{ex:chain-partial-flatness} and $\kappa=2$, $L$ is an optimal solution of the sparse relaxation and $\rho_2=L(f)=f^\star=0$. Its block on $\Delta_2$ satisfies simplicial regularity, with
\[
    \operatorname{rank}M_2^{\Delta_2}(L)=\operatorname{rank}M_1^{\Delta_2}(L)=1,
\]
and its unique atomic candidate is the locally feasible point $a=(a_2,a_3)=(0,0)$. The other two blocks are not simplicially regular; for $\ell\in\{1,3\}$, their ranks are $\operatorname{rank}M_1^{\Delta_\ell}(L)=2$ and $\operatorname{rank}M_2^{\Delta_\ell}(L)=3$. The atom $a$ extends to a global minimizer.
\end{proposition}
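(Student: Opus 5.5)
Our plan is to verify each claim directly from the measure representations~\eqref{eq:chain-local-measures}.

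\textbf{Feasibility and optimality.} Because $L$ is integration against the probability measure $\mu=\nu\otimes\delta_0\otimes\delta_0\otimes\nu$, supported on $[-1,1]^4$, it is the restriction of a genuine measure on $K$. Hence $L$ satisfies the conditions (i)--(iii) of §\ref{subsec:dense-moments} clique by clique. In particular $L(1)=1$, every clique moment matrix is PSD, and every localizing matrix $M_1^{\Delta_\ell}((1-x_j^2)L)=\int(1-x_j^2)v_1v_1^\top d\mu\succeq0$. Since $f(U,0,0,V)=0$ identically, we get $L(f)=0$. Moreover, $\rho_2\ge0$ because each $f_\ell$ is a sum of squares of degree at most $4$, so $L(f_\ell)$ is a sum of quadratic forms in $M_2^{\Delta_\ell}(L)\succeq0$. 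Together with $\rho_2\le f^\star=0$, this gives $\rho_2=L(f)=f^\star=0$, and $L$ is optimal.

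\textbf{The block on $\Delta_2$.} Here $\mu_2=\delta_{(0,0)}$, so $M_d^{\Delta_2}(L)=v_d(0,0)v_d(0,0)^\top=e_1e_1^\top$ has rank one for $d=1,2$. Its minimal face in the pseudo-moment cone is the ray spanned by this single point-evaluation matrix. That face is trivially simplicial and generated by a finite point evaluation, so it satisfies Definition~\ref{def:simplicial-face}. The unique candidate is therefore $a=(0,0)$, and it lies in $K_2$.

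\textbf{Ranks of the other blocks.} For $\Delta_1$, the moments are $L(x_1^k x_2^m)=\mathbb E[U^k]\,[m=0]$. In the basis $(1,x_1,x_2)$, $M_1$ is $\operatorname{diag}(1,1/3,0)$, which has rank $2$. In $M_2$, every row indexed by a monomial containing $x_2$ is zero. What remains is the Hankel matrix of $\nu$ on $(1,x_1,x_1^2)$, which is positive definite because $\nu$ has infinite support. This gives rank $3$, and $\Delta_3$ follows by symmetry.

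\textbf{Failure of simplicial regularity for $\Delta_1,\Delta_3$.} This is the main obstacle, since it requires the facial definition. The approach is to show that the minimal face $F$ of $M_2^{\Delta_1}(L)$ contains a continuum of point evaluations, which makes it non-simplicial or not generated by finitely many points. For every $s\in[-1,1]$, the matrix $v_2(s,0)v_2(s,0)^\top$ has range inside $\operatorname{span}\{v_2(s,0)\}$. That span lies in $\operatorname{range}M_2^{\Delta_1}(L)=\{$vectors supported on $1,x_1,x_1^2\}$. Therefore each such matrix lies in $F$. Infinitely many of these matrices are pairwise non-proportional, while $F$ has dimension at most $6$ (PSD matrices on a $3$-dimensional range). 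Hence $F$ is not generated by linearly independent point evaluations. If the definition instead demands a unique atomic decomposition, nonuniqueness follows directly: $\nu$'s degree-$4$ moments admit infinitely many quadrature representations, for example different $3$-point Gauss-type rules, and each yields a different atomic decomposition of $M_2^{\Delta_1}(L)$.

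\textbf{Extendability.} Finally, $a=(0,0)$ is extendable because $(0,0,0,0)\in X^\star$ by~\eqref{eq:chain-minimizers}, and it projects to $a$ on $\Delta_2$.
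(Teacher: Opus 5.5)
Most of your proposal follows the paper's proof step for step. The shared steps are the measure representation for feasibility, the sum-of-squares lower bound from the order-two clique blocks, the rank-one evaluation matrix on $\Delta_2$, range containment of the vectors $v_2(s,0)$ for the endpoint blocks, and $(0,0,0,0)\in X^\star$. Your explicit rank computations go beyond the paper: $M_1^{\Delta_1}(L)=\operatorname{diag}(1,1/3,0)$ and the positive definite Hankel block of $\nu$ on $(1,x_1,x_1^2)$ supply something its proof leaves implicit. The gap is in the step you flagged as the main obstacle.

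From ``$F$ contains infinitely many pairwise non-proportional matrices $v_2(s,0)v_2(s,0)^\top$'' and ``$\dim F\le 6$,'' nothing follows. Any simplicial cone of dimension at least two, such as $\operatorname{cone}\{A_2^{\Delta_1}(a_1),A_2^{\Delta_1}(a_2)\}$, also contains infinitely many pairwise non-proportional elements. The missing fact is extremality. Each $vv^\top$ with $v=v_2(s,0)$ is rank one, and $F\subseteq\mathcal P_2^{\Delta_1}$ lies in the PSD cone. So if $vv^\top=B+C$ with $B,C\in F$, then $0\preceq B\preceq vv^\top$ forces $B$ to be a multiple of $vv^\top$. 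Hence each of these matrices spans an extreme ray of $F$. A cone generated by $r$ linearly independent matrices has exactly $r$ extreme rays, so infinitely many distinct extreme rays rule out simplicial regularity. Equivalently, as in the proof of Theorem~\ref{thm:simplicial-face}: writing $vv^\top=\sum_i\beta_iA_2^{\Delta_1}(a_i)$ forces every $a_i$ with $\beta_i>0$ to equal $(s,0)$, so infinitely many candidates would be needed. This is the paper's sentence ``these rank-one rays are extreme''; once you include it, the dimension bound is unnecessary.

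Your Gauss-type quadrature fallback does not repair this under Definition~\ref{def:simplicial-face}. A second decomposition $A=\sum_j\beta_jA_2^{\Delta_1}(b_j)$ contradicts simpliciality only once you know each $A_2^{\Delta_1}(b_j)$ is itself a generator, which is again the extremality argument.

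Two smaller points. When you infer $v_2(s,0)v_2(s,0)^\top\in F$ from range containment, say why. The matrix $M_2^{\Delta_1}(L)-\varepsilon\,v_2(s,0)v_2(s,0)^\top$ is PSD for small $\varepsilon>0$. It also keeps the moment structure, because $\mathcal P_2^{\Delta_1}$ is the PSD cone intersected with a linear subspace. The face property then places the evaluation matrix in $F$. Finally, the lower bound $\rho_2\ge0$ should be stated for an arbitrary feasible functional $\widetilde L$, not for $L$ itself.
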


\begin{proof}

\emph{Feasibility and optimality.} The functional $L$ integrates a probability measure supported on $X^\star\subseteq K$, so its restriction to the sparse moment indices is feasible for~\eqref{eq:sparse-relaxation}. In particular, all moment and localizing matrices are positive semidefinite, and the overlap moments agree. For any feasible normalized functional $\widetilde L$, each squared polynomial in the objective has degree at most two before squaring and belongs to one clique. Positive semidefiniteness of its order-two moment block therefore gives
\begin{equation}
\begin{aligned}
    \widetilde L(f)
      &=\widetilde L\bigl((x_1x_2)^2\bigr)+\widetilde L(x_2^2)
        +\widetilde L\bigl((x_2-x_3)^2\bigr)+\widetilde L\bigl((x_3x_4)^2\bigr)\\
      &\geq0=L(f)=f^\star.
\end{aligned}
\label{eq:chain-lower-bound}
\end{equation}
Hence $\rho_2=f^\star=0$.

\emph{Simplicial regularity and extension to global minimizers.} Since $\mu_2=\delta_{(0,0)}$, $M_2^{\Delta_2}(L)=A_2^{\Delta_2}(0,0)$ is a normalized rank-one point-evaluation matrix. Its minimal face in $\mathcal P_2^{\Delta_2}$ is the ray generated by this matrix, so it satisfies simplicial regularity and $a=(0,0)$ is its unique candidate. This candidate satisfies all assigned box constraints and gives the full-degree locally feasible representation $\delta_{(0,0)}$.
For either endpoint block, the restriction to the univariate monomials $(1,s,s^2)$, with $s=x_1$ or $x_4$, is the positive definite moment matrix of the uniform measure on $[-1,1]$. Its range therefore contains every evaluation vector on the corresponding coordinate line. Subtracting a sufficiently small positive multiple of any such evaluation matrix preserves positive semidefiniteness and the moment structure, so the minimal face contains infinitely many distinct point-evaluation rays. These rank-one rays are extreme, which rules out a simplicial minimal face.
By~\eqref{eq:chain-minimizers},
\begin{equation}
    a=(0,0)\in\pi_{\Delta_2}(X^\star)=\{(0,0)\},
    \qquad
    (s,a_2,a_3,t)\in X^\star\quad\text{for all }s,t\in[-1,1].
    \label{eq:chain-extension}
\end{equation}
Thus fixing $x_{\Delta_2}=a$ preserves every global minimizer.
\end{proof}

At the above optimal solution, Proposition~\ref{prop:sparse-extraction} is inapplicable because the endpoint blocks are not flat. Nevertheless, the single simplicially regular clique already identifies coordinates shared by every global minimizer. Therefore, we ask the following question:
\begin{quote}\itshape
When only selected clique blocks satisfy simplicial regularity, under what conditions does an extracted atomic candidate extend to a global minimizer?
\end{quote}
We will try to address this question in the next section.

\section{Sufficient conditions for global extendability}
\label{sec:global-extendability}

In this section, we will develop an existence guarantee from simplicial regularity and maximum rank over the optimal face; rank-one extendability follows as a corollary.

We use the sparse relaxation and notation of Section~\ref{sec:preliminaries}. The objective has a fixed decomposition $f=\sum_{\Gamma\in\mathcal C}f_\Gamma$, every constraint is assigned to a clique containing its variables, and distinct cliques are coupled only through shared moment entries. 
The feasible set $K$ is nonempty and compact. 
Fix $\kappa\geq\kappa_0$ and assume that the sparse relaxation has a finite, attained optimum $\rho_\kappa$. Value exactness is imposed only where stated.

\subsection{Global extendability and clique boundaries}
\label{subsec:extendability}

A locally feasible atomic representation on a clique certifies that its atoms satisfy that clique's assigned constraints. To fix an atom without excluding every global minimizer, we need the following additional property.

\begin{definition}[Global extendability]
\label{def:extendability}
For $J\subseteq[n]$, an assignment $c\in\mathbb R^{|J|}$ is \emph{globally extendable} if $c\in\pi_J(X^\star)$, that is, if some global minimizer agrees with $c$ on $J$. In particular, an extracted atom $a$ on a clique $\Delta$ is globally extendable if $a\in\pi_\Delta(X^\star)$.
\end{definition}

Thus fixing $x_J=c$ retains at least one global minimizer exactly when $c$ is globally extendable.

\begin{definition}[Boundary and interior variables]
\label{def:boundary}
For a selected clique $\Delta\in\mathcal C$, define its \emph{boundary} and \emph{interior} by
\begin{equation}
    B_\Delta=\Delta\cap
         \Bigl(\bigcup_{\Gamma\in\mathcal C,\,\Gamma\ne\Delta}\Gamma\Bigr),
    \qquad I_\Delta=\Delta\setminus B_\Delta.
    \label{eq:clique-boundary}
\end{equation}
\end{definition}

Write $x_\Delta=(z,b)$, where $z=x_{I_\Delta}$ and $b=x_{B_\Delta}$, and write an atom as $a_i=(z_i,b_i)$. The boundary includes every variable shared with another clique and is the only interface between $\Delta$ and the remaining problem. Define the local fiber
\begin{equation}
    K_\Delta(b)=\{z:(z,b)\in K_\Delta\}.
    \label{eq:conditional-clique-problem}
\end{equation}
Once $b$ is fixed, the selected clique can optimize its interior by minimizing $f_\Delta(\cdot,b)$ over $K_\Delta(b)$.

\subsection{Extendability from simplicial regularity over the optimal face}
\label{subsec:rank-one-extendability}

Let $\mathcal S_\kappa$ be the feasible set of normalized sparse moment sequences and define
\begin{equation}
    \mathcal F_\kappa
       =\{y\in\mathcal S_\kappa:L_y(f)=\rho_\kappa\}.
    \label{eq:optimal-face}
\end{equation}
For $J$ contained in a clique, the matrix $M_\kappa^J(y)$ is affine and positive semidefinite on $\mathcal F_\kappa$. Its maximum rank captures the ranges of all optimal blocks.

\begin{lemma}[Common range of optimal moment blocks]
\label{lem:common-range}
Let $J$ be contained in a clique, and let $y^\circ\in\mathcal F_\kappa$ satisfy
\begin{equation}
    \operatorname{rank}M_\kappa^J(y^\circ)
      =\max_{y\in\mathcal F_\kappa}
         \operatorname{rank}M_\kappa^J(y).
    \label{eq:maximum-block-rank}
\end{equation}
Then, for every $y\in\mathcal F_\kappa$,
\begin{equation}
    \operatorname{range}M_\kappa^J(y)
       \subseteq\operatorname{range}M_\kappa^J(y^\circ),
    \qquad
    \ker M_\kappa^J(y^\circ)\subseteq\ker M_\kappa^J(y).
    \label{eq:common-range}
\end{equation}
\end{lemma}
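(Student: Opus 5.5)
The plan is to exploit two facts: $\mathcal F_\kappa$ is convex, and $y\mapsto M_\kappa^J(y)$ is affine. Convexity holds because $\mathcal S_\kappa$ is cut out by an affine normalization and linear matrix inequalities, and $\mathcal F_\kappa$ is its intersection with the hyperplane $L_y(f)=\rho_\kappa$.

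Fix $y\in\mathcal F_\kappa$ and form the midpoint $\bar y=\tfrac12(y^\circ+y)$. Then $\bar y\in\mathcal F_\kappa$, and by linearity of the moment map
\[
M_\kappa^J(\bar y)=\tfrac12 M_\kappa^J(y^\circ)+\tfrac12 M_\kappa^J(y).
\]
Both summands are positive semidefinite, since $J$ lies in a clique and $M_\kappa^J$ is a principal submatrix of a clique moment matrix.

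The key step is the kernel identity for sums of positive semidefinite matrices:
\[
\ker(A+B)=\ker A\cap\ker B\qquad\text{for }A,B\succeq0.
\]
It holds because $v^\top(A+B)v=0$ forces $v^\top Av=v^\top Bv=0$, and for a positive semidefinite matrix $v^\top Av=0$ implies $Av=0$. Applying it gives
\[
\ker M_\kappa^J(\bar y)=\ker M_\kappa^J(y^\circ)\cap\ker M_\kappa^J(y)\subseteq\ker M_\kappa^J(y^\circ).
\]
Hence $\operatorname{rank}M_\kappa^J(\bar y)\ge\operatorname{rank}M_\kappa^J(y^\circ)$.

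By the maximality assumption~\eqref{eq:maximum-block-rank}, this inequality must be an equality. The two kernels then have the same dimension, and one contains the other, so they coincide: $\ker M_\kappa^J(y^\circ)\cap\ker M_\kappa^J(y)=\ker M_\kappa^J(y^\circ)$. This gives $\ker M_\kappa^J(y^\circ)\subseteq\ker M_\kappa^J(y)$. For symmetric matrices the range is the orthogonal complement of the kernel, so taking orthogonal complements yields the range inclusion in~\eqref{eq:common-range}.

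No step is a genuine obstacle. The only points to check carefully are that $\mathcal F_\kappa$ is convex and that $M_\kappa^J(y)\succeq0$ for every feasible $y$, which follows because it is a principal submatrix of the clique block $M_\kappa^{\Delta}(y)\succeq0$.
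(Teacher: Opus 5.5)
Your proof is correct and follows the paper's argument essentially step for step. The midpoint $\tfrac12(y^\circ+y)$ lies in $\mathcal F_\kappa$, the identity $\ker(A+B)=\ker A\cap\ker B$ for $A,B\succeq0$ together with maximality of the rank forces $\ker M_\kappa^J(y^\circ)\subseteq\ker M_\kappa^J(y)$, and orthogonal complements give the range inclusion; your justifications of convexity of $\mathcal F_\kappa$ and of $M_\kappa^J(y)\succeq0$ (as a principal submatrix of the clique block) fill in details the paper leaves implicit.
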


\begin{proof}
Put $A=M_\kappa^J(y^\circ)$ and $B=M_\kappa^J(y)$. Convexity gives $(y^\circ+y)/2\in\mathcal F_\kappa$, with block $(A+B)/2$. Since $A,B\succeq0$, $\ker(A+B)=\ker A\cap\ker B$. Thus $\operatorname{rank}(A+B)\geq\operatorname{rank}A$, while maximum rank gives the reverse inequality. Hence $\ker A\subseteq\ker B$; taking orthogonal complements gives the range inclusion.
\end{proof}

The maximum in~\eqref{eq:maximum-block-rank} is attained because $\mathcal F_\kappa$ is nonempty and the possible ranks form a finite set of integers. The same argument applies to $M_t^J$ for any $t\leq\kappa$.

\paragraph{Simplicial regularity and atomic decomposition.}
For $J$ contained in a clique, define the pseudo-moment matrix cone and the point-evaluation matrix by
\begin{equation}
    \mathcal P_\kappa^J
      =\bigl\{M_\kappa^J(w):
          w=(w_\alpha)_{\alpha\in\mathbb N_{2\kappa}^J},\
          M_\kappa^J(w)\succeq0\bigr\},
    \qquad
    A_\kappa^J(a)=v_\kappa^J(a)\,v_\kappa^J(a)^\top.
    \label{eq:pseudo-moment-cone}
\end{equation}
Here $v_\kappa^J$ lists the monomials of degree at most $\kappa$ in $x_J$. The cone imposes the moment matrix structure and positive semidefiniteness, without normalization or localizing constraints. After homogenization, it is the matrix realization of the homogeneous pseudo-moment cone used by \citet{kang2026simplicial}. Their Theorem~1 gives a generic regime in which the following facial property holds. We use the term \emph{simplicial regularity} for this property of the selected block and assume it directly.

\begin{definition}[Simplicial regularity]
\label{def:simplicial-face}
A matrix $A\in\mathcal P_\kappa^J$ satisfies \emph{simplicial regularity} if there are distinct finite points $a_1,\ldots,a_r\in\mathbb R^{|J|}$ such that
\begin{equation}
    \operatorname{minface}(A,\mathcal P_\kappa^J)
      =\operatorname{cone}\{A_\kappa^J(a_1),\ldots,A_\kappa^J(a_r)\},
    \label{eq:simplicial-face}
\end{equation}
with the $r$ generators linearly independent. We call the $a_i$ \emph{atomic candidates}.
\end{definition}

The condition requires both simpliciality and generation by finite point-evaluation rays. Simpliciality alone does not exclude higher-rank generators. Simplicial regularity gives an atomic decomposition of the selected block without a flatness assumption. When $J=\Delta$, the candidates need not all satisfy the assigned clique constraints; their local feasibility can be checked after extraction.

\begin{theorem}[Global extendability under simplicial regularity]
\label{thm:simplicial-face}
Assume $\rho_\kappa=f^\star$. Let $J$ be contained in a clique, and let $\bar y\in\mathcal F_\kappa$ have a block $M_\kappa^J(\bar y)$ of maximum rank over $\mathcal F_\kappa$. Suppose $A=M_\kappa^J(\bar y)$ satisfies simplicial regularity as in Definition~\ref{def:simplicial-face}, with atomic candidates $a_1,\ldots,a_r$. Then there are unique coefficients $\lambda_i>0$, with $\sum_i\lambda_i=1$, such that
\begin{equation}
    \bar y_\alpha=\sum_{i=1}^r\lambda_ia_i^\alpha
    \qquad(\alpha\in\mathbb N_{2\kappa}^J).
    \label{eq:simplicial-decomposition}
\end{equation}
Every optimal block $M_\kappa^J(y)$ belongs to $\operatorname{conv}\{A_\kappa^J(a_1),\ldots,A_\kappa^J(a_r)\}$, and
\begin{equation}
    \varnothing\ne\pi_J(X^\star)\subseteq\{a_1,\ldots,a_r\}.
    \label{eq:simplicial-projection}
\end{equation}
In particular, at least one atomic candidate is globally extendable. If $J=\Delta$ is a clique, then $\pi_\Delta(X^\star)\subseteq\{a_1,\ldots,a_r\}\cap K_\Delta$, so at least one candidate is also locally feasible. If every $a_i\in K_\Delta$, the measure $\sum_i\lambda_i\delta_{a_i}$ is a locally feasible atomic representation in the sense of Definition~\ref{def:local-representation}.
\end{theorem}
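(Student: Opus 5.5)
Since $A$ lies in the relative interior of its own minimal face, and that face is the simplicial cone spanned by the linearly independent matrices $A_\kappa^J(a_1),\ldots,A_\kappa^J(a_r)$, we can write $A=\sum_i\lambda_iA_\kappa^J(a_i)$ with every $\lambda_i>0$. Uniqueness of the $\lambda_i$ follows from linear independence. To obtain \eqref{eq:simplicial-decomposition}, we read this identity entrywise: every moment $\bar y_\alpha$ with $\alpha\in\mathbb N_{2\kappa}^J$ appears as an entry $y_{\beta+\gamma}$ of $M_\kappa^J$, so it equals $\sum_i\lambda_ia_i^\alpha$. The $(1,1)$ entry gives $\sum_i\lambda_i=\bar y_0=1$.

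For the statement about every optimal block, take any $y\in\mathcal F_\kappa$ and set $B=M_\kappa^J(y)$. By Lemma~\ref{lem:common-range}, $\operatorname{range}B\subseteq\operatorname{range}A$. We will show that this puts $B$ in the minimal face of $A$. Since $B\succeq0$ and has range inside $\operatorname{range}A$, there is $\epsilon>0$ with $A-\epsilon B\succeq0$. The matrix $A-\epsilon B$ is still a moment matrix $M_\kappa^J(\bar y-\epsilon y)$, so it belongs to $\mathcal P_\kappa^J$. Hence $A=\epsilon B+(A-\epsilon B)$ is a decomposition inside the cone, and the face property forces $B\in\operatorname{minface}(A,\mathcal P_\kappa^J)$. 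Therefore $B=\sum_i\mu_iA_\kappa^J(a_i)$ with $\mu_i\geq0$, and the $(1,1)$ entry gives $\sum_i\mu_i=y_0=1$. This is the key step: it uses the maximum-rank hypothesis together with the facial structure of $\mathcal P_\kappa^J$, and it is the step I would verify most carefully. In particular, the argument relies on the fact that subtracting a scaled moment matrix preserves the Hankel-type structure, which holds because $M_\kappa^J$ is linear in the moment sequence.

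To get \eqref{eq:simplicial-projection}, we use value exactness. Let $x^\star\in X^\star$. The truncated moments of $\delta_{x^\star}$ on $\mathcal A_{2\kappa}$ are feasible and have objective value $f(x^\star)=f^\star=\rho_\kappa$, so they lie in $\mathcal F_\kappa$. Their $J$-block is $A_\kappa^J(\pi_J x^\star)$. By the previous paragraph, this rank-one matrix is a convex combination $\sum_i\mu_iA_\kappa^J(a_i)$. Linear independence of the generators, together with the fact that a rank-one point-evaluation matrix in a simplicial face must be one of its extreme rays, forces exactly one $\mu_i=1$. Comparing the degree-one entries in the first row then gives $\pi_Jx^\star=a_i$. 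We know $X^\star\neq\varnothing$ because $K$ is compact and nonempty, so $\pi_J(X^\star)$ is nonempty. This proves that at least one candidate is globally extendable.

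For the clique case $J=\Delta$, every $x^\star\in X^\star$ lies in $K$, so $\pi_\Delta x^\star\in K_\Delta$. This gives the inclusion into $\{a_1,\ldots,a_r\}\cap K_\Delta$. Finally, if every $a_i\in K_\Delta$, then \eqref{eq:simplicial-decomposition} shows that $\sum_i\lambda_i\delta_{a_i}$ represents all clique moments through degree $2\kappa$ with atoms in $K_\Delta$, which matches Definition~\ref{def:local-representation} with $t=\kappa$.
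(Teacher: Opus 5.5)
Your proposal is correct and follows the paper's proof essentially step for step: the relative-interior decomposition in the simplicial minimal face, then the range inclusion from Lemma~\ref{lem:common-range} giving $A-\varepsilon B\in\mathcal P_\kappa^J$ and hence $B\in\operatorname{minface}(A,\mathcal P_\kappa^J)$, then the same argument applied to the evaluation sequence of a global minimizer. Your ``extreme ray'' step says the same thing as the paper's range argument: rank-one positive semidefinite matrices span extreme rays of the PSD cone, hence of any subcone containing them, so every generator with positive weight must equal $A_\kappa^J(x^\star_J)$.
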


\begin{proof}
Write $F=\operatorname{minface}(A,\mathcal P_\kappa^J)$. A point lies in the relative interior of its minimal face, so simpliciality gives a unique decomposition $A=\sum_i\lambda_iA_\kappa^J(a_i)$ with all $\lambda_i>0$. The constant entry gives $\sum_i\lambda_i=1$. Equality of the moment matrices determines every moment through degree $2\kappa$, proving~\eqref{eq:simplicial-decomposition}.

Let $y\in\mathcal F_\kappa$ and $B=M_\kappa^J(y)$. Lemma~\ref{lem:common-range} gives $\operatorname{range}B\subseteq\operatorname{range}A$. Thus $A-\varepsilon B\succeq0$ for sufficiently small $\varepsilon>0$. Both matrices have the moment matrix structure, so $A-\varepsilon B\in\mathcal P_\kappa^J$. The identity $A=(A-\varepsilon B)+\varepsilon B$ and the face property imply $B\in F$. By normalization, $B$ is a convex combination of the atomic generators.

Now fix any $x^\star\in X^\star$. Value exactness places its evaluation sequence in $\mathcal F_\kappa$, so its block satisfies
\[
    A_\kappa^J(x^\star_J)=\sum_{i=1}^r\beta_iA_\kappa^J(a_i),
    \qquad\beta_i\geq0,\quad\sum_i\beta_i=1.
\]
The matrix on the left has rank one. Every positive semidefinite summand with positive coefficient must have its range in that same one-dimensional space. Since all evaluation vectors have constant coordinate one, any such generator equals $A_\kappa^J(x^\star_J)$. The first moments, with $\kappa\geq1$, then give $a_i=x^\star_J$. This proves~\eqref{eq:simplicial-projection}; nonemptiness follows from the standing assumptions. If $J=\Delta$, global feasibility implies $x^\star_\Delta\in K_\Delta$, giving the final claims.
\end{proof}

The rank-one case forces all optimal blocks to agree and gives a unique globally extendable assignment.

\begin{corollary}[Rank-one extendability]
\label{cor:rank-one-extendability}
Assume $\rho_\kappa=f^\star$. Let $J$ be contained in a clique and suppose
\begin{equation}
    \max_{y\in\mathcal F_\kappa}
       \operatorname{rank}M_\kappa^J(y)=1.
    \label{eq:rank-one-optimal-face}
\end{equation}
Then all $y\in\mathcal F_\kappa$ have the same block $M_\kappa^J(y)$, and there is a unique $c\in\mathbb R^{|J|}$ such that
\begin{equation}
    \pi_J(X^\star)=\{c\}.
    \label{eq:unique-extendable-value}
\end{equation}
Its coordinates are $c_j=L_y(x_j)$ for any $y\in\mathcal F_\kappa$ and $j\in J$. Thus $c$ is the unique globally extendable assignment on $J$.
\end{corollary}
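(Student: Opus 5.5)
The plan is to apply Theorem~\ref{thm:simplicial-face}, after first showing that a rank-one block is automatically simplicially regular with a single generator, and then to read off uniqueness. Pick $\bar y\in\mathcal F_\kappa$ attaining the maximum rank, which equals one by~\eqref{eq:rank-one-optimal-face}.

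\emph{Step 1: the block is a point evaluation.} Put $A=M_\kappa^J(\bar y)$, so $A=uu^\top$ for some vector $u$. The constant entry $A_{0,0}=\bar y_0=1$ lets us normalize $u_0=1$. Since $A$ is a moment matrix, its entry in row $x_j$ and column $1$ equals its entry in row $1$ and column $x_j$; combined with rank one and Hankel-type consistency, this should force $u=v_\kappa^J(c)$ with $c_j=u_{x_j}=\bar y_{e_j}$. Concretely, the entry indexed by $(x^\alpha,x^\beta)$ depends only on $\alpha+\beta$, so $u_\alpha u_\beta=u_{\alpha+\beta}u_0=u_{\alpha+\beta}$ whenever $|\alpha+\beta|\le\kappa$. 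Induction on $|\alpha|$ then gives $u_\alpha=c^\alpha$, hence $A=A_\kappa^J(c)$.

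\emph{Step 2: simplicial regularity.} Next we show that $\operatorname{minface}(A,\mathcal P_\kappa^J)$ is the ray through $A$. Let $A=B+C$ with $B,C\in\mathcal P_\kappa^J$. Then $B,C\succeq0$ and $B\preceq A$, so $\operatorname{range}B\subseteq\operatorname{span}\{u\}$ and $B=tA$ for some $t\ge0$. Hence the ray is a face, and $A$ satisfies Definition~\ref{def:simplicial-face} with $r=1$ and the single candidate $a_1=c$.

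\emph{Step 3: conclusions.} Theorem~\ref{thm:simplicial-face} now says that every optimal block lies in $\operatorname{conv}\{A_\kappa^J(c)\}=\{A_\kappa^J(c)\}$, so all $y\in\mathcal F_\kappa$ share this block. It also gives $\varnothing\ne\pi_J(X^\star)\subseteq\{c\}$, hence $\pi_J(X^\star)=\{c\}$. Reading the first-order entries of the common block gives $L_y(x_j)=c_j$ for every $y\in\mathcal F_\kappa$. Uniqueness of the globally extendable assignment is then just Definition~\ref{def:extendability}.

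\emph{Main obstacle.} The delicate step is Step~1: deriving the point-evaluation structure from rank one alone. It needs $\kappa\ge1$ and the moment consistency over the index range. The consistency is only available for $|\alpha+\beta|\le\kappa$ in the induction, but that range is exactly enough for $u_\alpha$ with $|\alpha|\le\kappa$: write $\alpha=\alpha'+e_j$ and use the entry indexed by $(x^{\alpha'},x_j)$.
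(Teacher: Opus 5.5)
Your proposal is correct and follows essentially the same route as the paper: factor the normalized rank-one block as $uu^\top$ with $u_0=1$, use the Hankel identities to obtain $u=v_\kappa^J(c)$, note that the minimal face in $\mathcal P_\kappa^J$ is the single point-evaluation ray, and apply Theorem~\ref{thm:simplicial-face} with one generator. Your Steps~1 and~2 just write out in full the induction on $|\alpha|$ and the face argument that the paper compresses into one line each.
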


\begin{proof}
Choose $\bar y\in\mathcal F_\kappa$. Its normalized rank-one block has the form $A_\kappa^J(c)$: factor it as $ww^\top$ with $w_0=1$, put $c_j=w_{e_j}$, and use the Hankel identities to obtain $w_\alpha=c^\alpha$. Its minimal face in $\mathcal P_\kappa^J$ is the ray generated by $A_\kappa^J(c)$. Apply Theorem~\ref{thm:simplicial-face} with one generator. Every optimal block is that generator, and the nonempty set $\pi_J(X^\star)$ is contained in $\{c\}$.
\end{proof}

These are existence guarantees. The rank-two ring example in Section~\ref{sec:extendability-examples} has a block of maximum rank two with a simplicial minimal face generated by the candidates $(0,0,0)$ and $(1,1,1)$; only the latter is globally extendable. Conversely, the value-exact counterexample with a non-extendable rank-one clique shows why maximum rank cannot simply be omitted. Its selected block satisfies simplicial regularity, but combining its optimal sequence with the evaluation sequence of a global minimizer increases the block rank.

\subsection{Extendability through the clique boundary}
\label{subsec:boundary-certificate}

The preceding results ensure that the candidate set contains a globally extendable assignment. To certify every atom of a selected clique, we combine conditional optimality with a boundary value shared by all global minimizers.

Fix $\Delta\in\mathcal C$ and write $x_\Delta=(z,b)$ as in §\ref{subsec:extendability}. Let $\bar y\in\mathcal F_\kappa$ admit a locally feasible atomic representation through the full relaxation degree:
\begin{equation}
    \mu_\Delta=\sum_{i=1}^r\lambda_i\delta_{(z_i,b_i)},
    \qquad (z_i,b_i)\in K_\Delta,\quad
    \lambda_i>0,\quad \sum_i\lambda_i=1,
    \label{eq:theory-local-measure}
\end{equation}
with
\begin{equation}
    L_{\bar y}(q)=\int q\,d\mu_\Delta
      \qquad\text{for every }q\in\mathbb R[x_\Delta]_{2\kappa}.
    \label{eq:theory-full-representation}
\end{equation}
This is the full-degree instance of Definition~\ref{def:local-representation}. In particular, simplicial regularity in Theorem~\ref{thm:simplicial-face} supplies this representation when $J=\Delta$ and all of its atomic candidates satisfy the clique constraints.

\begin{lemma}[Atomic replacement]
\label{lem:atomic-replacement}
Let $\bar y\in\mathcal F_\kappa$ satisfy~\eqref{eq:theory-local-measure}--\eqref{eq:theory-full-representation}. Then every atom is conditionally optimal over the interior variables:
\begin{equation}
    z_i\in\operatorname*{arg\,min}_{z\in K_\Delta(b_i)}
                  f_\Delta(z,b_i),
    \qquad i=1,\ldots,r.
    \label{eq:conditional-atomic-optimality}
\end{equation}
\end{lemma}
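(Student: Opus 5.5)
We argue by contradiction through an exchange (replacement) argument: if some atom were not conditionally optimal, we would replace it by a strictly better point with the same boundary value and obtain a feasible sequence with strictly smaller objective, contradicting $\bar y\in\mathcal F_\kappa$. Suppose that for some index $i$ there is $z'\in K_\Delta(b_i)$ with $f_\Delta(z',b_i)<f_\Delta(z_i,b_i)$. Such a minimizer exists in any case, because $K_\Delta(b_i)$ is a closed fiber containing $z_i$; if it is unbounded we simply use the better point $z'$ and do not need a minimizer. Define the modified local measure
\[
    \mu'_\Delta=\mu_\Delta-\lambda_i\delta_{(z_i,b_i)}+\lambda_i\delta_{(z',b_i)},
\]
which is a probability measure on $K_\Delta$. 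Define $y'$ to coincide with $\bar y$ on every index in $\mathcal A_{2\kappa}$ not in $\mathbb N_{2\kappa}^{\Delta}$, and to equal the moments of $\mu'_\Delta$ on $\mathbb N_{2\kappa}^{\Delta}$.

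The key point is consistency on shared indices. An index $\alpha\in\mathbb N_{2\kappa}^\Delta$ that also belongs to another clique $\Gamma$ is supported in $\Delta\cap\Gamma\subseteq B_\Delta$, so $x^\alpha$ depends only on $b$. Since the replacement does not change the boundary marginal of $\mu_\Delta$, we have $\int x^\alpha d\mu'_\Delta=\int x^\alpha d\mu_\Delta=\bar y_\alpha$ by~\eqref{eq:theory-full-representation}. Hence $y'$ is well defined and agrees with $\bar y$ on all moments shared with other cliques, and $y'_0=1$.

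For feasibility, the blocks of cliques $\Gamma\ne\Delta$, together with their localizing and equality constraints, involve only indices in $\mathbb N_{2\kappa}^\Gamma$, where $y'$ equals $\bar y$, so they remain satisfied. The constraints assigned to $\Delta$ are satisfied because the clique part of $y'$ is the truncated moment sequence of a measure supported on $K_\Delta$:
\begin{itemize}
\item moment matrices are Gram integrals, hence positive semidefinite;
\item localizing matrices are integrals of $g_j v v^\top$ with $g_j\ge0$ on $K_\Delta$, hence positive semidefinite;
\item equality moments vanish because $h_j=0$ on $K_\Delta$.
\end{itemize}
The degree bookkeeping is exactly why the full-degree representation through $2\kappa$ is required: every constraint of $\Delta$ is a moment of degree at most $2\kappa$ in $x_\Delta$.

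For the objective, only $f_\Delta$ changes, since the other terms $f_\Gamma$ use only moments indexed in $\Gamma$, and these moments are unchanged by the previous paragraph. Since $\deg f_\Delta\le 2\kappa$,
\[
    L_{y'}(f)-L_{\bar y}(f)=\lambda_i\bigl(f_\Delta(z',b_i)-f_\Delta(z_i,b_i)\bigr)<0,
\]
using $\lambda_i>0$. This contradicts $L_{\bar y}(f)=\rho_\kappa$, which proves~\eqref{eq:conditional-atomic-optimality}.

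The main obstacle is the bookkeeping in the second step: we must verify that every index shared between $\Delta$ and another clique involves only boundary variables, and that the objective decomposition charges only $f_\Delta$ to clique-$\Delta$-only moments. Any constant or boundary-only monomials appearing in other $f_\Gamma$ are unchanged because the boundary marginal is preserved. Value exactness is not needed here.
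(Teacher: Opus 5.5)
Your proposal is correct and follows the paper's proof: move the $i$-th atom to a strictly better point in the same boundary fiber, note that every moment shared with another clique is supported on $B_\Delta$ and is therefore unchanged, check that the $\Delta$-constraints hold because the new clique moments come from a probability measure on $K_\Delta$, and conclude that the objective decreases by $\lambda_i\bigl(f_\Delta(z',b_i)-f_\Delta(z_i,b_i)\bigr)<0$, contradicting optimality. Your remark about the existence of a minimizer is unnecessary, and closedness of the fiber alone would not guarantee one; the contradiction already shows that no point of $K_\Delta(b_i)$ beats $z_i$, so $z_i$ itself attains the minimum.
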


\begin{proof}
Suppose that $\widetilde z\in K_\Delta(b_i)$ satisfies $f_\Delta(\widetilde z,b_i)<f_\Delta(z_i,b_i)$. Move the atom $(z_i,b_i)$ to $(\widetilde z,b_i)$, obtaining $\widetilde\mu_\Delta$. Define $\widetilde y$ by using its moments on $\Delta$ and the moments of $\bar y$ elsewhere.

Every moment shared with another clique is supported on $B_\Delta$, whose marginal is unchanged. Thus the remaining clique blocks and their constraints are unchanged. On $\Delta$, the new moments come from a probability measure supported on $K_\Delta$, so all assigned constraints hold. Hence $\widetilde y$ is feasible, but
\begin{equation}
    L_{\widetilde y}(f)
       =\rho_\kappa
         +\lambda_i\bigl(
            f_\Delta(\widetilde z,b_i)-f_\Delta(z_i,b_i)
          \bigr)
       <\rho_\kappa,
    \label{eq:replacement-objective-change}
\end{equation}
contradicting optimality.
\end{proof}

Combining Corollary~\ref{cor:rank-one-extendability} on the boundary with Lemma~\ref{lem:atomic-replacement} gives a certificate for every extracted atom, without requiring the entire clique block to have rank one.

\begin{theorem}[Boundary rank-one certificate]
\label{thm:boundary-certificate}
Assume $\rho_\kappa=f^\star$. Let $\bar y\in\mathcal F_\kappa$ admit the locally feasible representation~\eqref{eq:theory-local-measure}--\eqref{eq:theory-full-representation}. If
\begin{equation}
    \max_{y\in\mathcal F_\kappa}
       \operatorname{rank}M_\kappa^{B_\Delta}(y)=1,
    \label{eq:boundary-rank-one-condition}
\end{equation}
then every atom $(z_i,b_i)$ of $\mu_\Delta$ is globally extendable.
\end{theorem}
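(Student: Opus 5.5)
We combine Corollary~\ref{cor:rank-one-extendability}, applied with $J=B_\Delta$, and the atomic replacement Lemma~\ref{lem:atomic-replacement}. The plan has three steps: show that every atom has the same boundary value $c$, show that $c$ is realized by a global minimizer, and then use conditional optimality to replace that minimizer's interior coordinates by the atom's.

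\emph{Step 1: all atoms share the boundary value $c$.} Since $B_\Delta\subseteq\Delta$ lies in a clique and $\rho_\kappa=f^\star$, Corollary~\ref{cor:rank-one-extendability} applies to $J=B_\Delta$. It gives a unique $c$ with $\pi_{B_\Delta}(X^\star)=\{c\}$, and every $y\in\mathcal F_\kappa$ has block $M_\kappa^{B_\Delta}(y)=A_\kappa^{B_\Delta}(c)$. In particular this holds for $\bar y$. By~\eqref{eq:theory-full-representation},
\[
A_\kappa^{B_\Delta}(c)=M_\kappa^{B_\Delta}(\bar y)=\sum_i\lambda_iA_\kappa^{B_\Delta}(b_i).
\]
Each summand is positive semidefinite and $\lambda_i>0$, so each $A_\kappa^{B_\Delta}(b_i)$ has range inside the span of $v_\kappa^{B_\Delta}(c)$. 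Because the constant coordinate of every evaluation vector equals one, this forces $v_\kappa^{B_\Delta}(b_i)=v_\kappa^{B_\Delta}(c)$. Reading off the first-degree coordinates gives $b_i=c$ for all $i$. The same argument appears in the proof of Theorem~\ref{thm:simplicial-face}. Alternatively, the zero variance $L_{\bar y}((x_j-c_j)^2)=0$ together with $\lambda_i>0$ gives the same conclusion.

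\emph{Step 2: pick a global minimizer and swap its interior.} Fix $x^\star\in X^\star$ and write $x^\star_\Delta=(z^\star,c)$. Fix an atom $i$ and define $\hat x$ by $\hat x_{I_\Delta}=z_i$ and $\hat x_k=x^\star_k$ for $k\notin I_\Delta$. Then $\hat x_\Delta=(z_i,c)=(z_i,b_i)\in K_\Delta$.

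Interior variables belong to no other clique, so for every $\Gamma\neq\Delta$ we have $\hat x_\Gamma=x^\star_\Gamma$. This gives $\hat x_\Gamma\in K_\Gamma$ and $f_\Gamma(\hat x_\Gamma)=f_\Gamma(x^\star_\Gamma)$. The only point needing care is that every constraint is assigned to a clique containing its variables, so the constraints assigned to other cliques do not involve interior variables of $\Delta$. Hence $\hat x\in K$.

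\emph{Step 3: compare objectives.} Since $z^\star\in K_\Delta(c)$, Lemma~\ref{lem:atomic-replacement} gives $f_\Delta(z_i,c)\le f_\Delta(z^\star,c)$. Therefore $f(\hat x)\le f(x^\star)=f^\star$. Since $\hat x\in K$, this forces $\hat x\in X^\star$, and so $(z_i,b_i)=\hat x_\Delta\in\pi_\Delta(X^\star)$.

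The main obstacle is Step~1, which deduces $b_i=c$ from the rank-one boundary block. It relies on the full-degree representation~\eqref{eq:theory-full-representation} restricted to $B_\Delta$, and on the maximum-rank hypothesis over the whole optimal face rather than only at $\bar y$. That hypothesis is also what guarantees, through value exactness, that $c$ is the boundary value of an actual global minimizer and not merely of the pseudo-moment solution $\bar y$.
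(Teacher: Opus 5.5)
Your proof is correct and follows the paper's argument. You apply Corollary~\ref{cor:rank-one-extendability} on $B_\Delta$ to force every atom's boundary value to equal the common minimizer value $c$, then swap a global minimizer's interior coordinates for $z_i$ and invoke Lemma~\ref{lem:atomic-replacement} to obtain a feasible point of value at most $f^\star$. The only cosmetic difference is in deriving $b_i=c$: the paper uses the zero-variance identity $\sum_i\lambda_i(b_{i,j}-c_j)^2=L_{\bar y}\bigl((x_j-c_j)^2\bigr)=0$, which you list as your alternative, whereas your primary route reuses the rank-one range argument from the proof of Theorem~\ref{thm:simplicial-face}.
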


\begin{proof}
Apply Corollary~\ref{cor:rank-one-extendability} with $J=B_\Delta$. There is a value $c$ shared by all global minimizers and all optimal boundary blocks. For every $j\in B_\Delta$, the common rank-one block gives $L_{\bar y}(x_j)=c_j$ and $L_{\bar y}(x_j^2)=c_j^2$. Therefore
\begin{equation}
    \sum_{i=1}^r\lambda_i(b_{i,j}-c_j)^2
       =L_{\bar y}\bigl((x_j-c_j)^2\bigr)=0.
    \label{eq:boundary-zero-variance}
\end{equation}
Since every weight is positive, $b_i=c$ for all $i$.

Fix an atom $(z_i,c)$ and any $x^\star\in X^\star$. Write $z^\star=x^\star_{I_\Delta}$ and replace its interior coordinates by $z_i$:
\[
    \widetilde x_{I_\Delta}=z_i,
    \qquad
    \widetilde x_{[n]\setminus I_\Delta}
       =x^\star_{[n]\setminus I_\Delta}.
\]
The selected clique is feasible because $(z_i,c)\in K_\Delta$. All other cliques are unchanged, since none contains an interior variable of $\Delta$. Thus $\widetilde x\in K$. Lemma~\ref{lem:atomic-replacement} and $z^\star\in K_\Delta(c)$ give
\begin{equation}
    f(\widetilde x)
       =f(x^\star)+f_\Delta(z_i,c)-f_\Delta(z^\star,c)
       \leq f^\star.
    \label{eq:certificate-objective}
\end{equation}
Feasibility forces equality. Hence $\widetilde x\in X^\star$ and $\widetilde x_\Delta=(z_i,b_i)$, proving global extendability.
\end{proof}
Corollary~\ref{cor:rank-one-extendability} and Theorem~\ref{thm:boundary-certificate} derive global extendability from value exactness and rank one of a moment block over the whole optimal face, with a full-degree locally feasible representation required for the boundary certificate. 
Theorem~\ref{thm:simplicial-face} guarantees at least one globally extendable candidate under simplicial regularity and maximum rank over that face. In the next section, we will illustrate these results with several examples, and we will discuss that value exactness and rank one of a moment block cannot be dropped for global extendability.

\section{Examples for the extendability conditions}
\label{sec:extendability-examples}

We illustrate the certificates of Section~\ref{sec:global-extendability} with four examples. The first two show when the certificates apply; the last two show how extendability can fail without value exactness or the boundary rank-one condition. All relaxations have order $\kappa=2$.

We first illustrate the extendability guarantees on a value-exact ring. The selected clique has rank one over the whole optimal face, so its unique atom is globally extendable although the remaining blocks are not simplicially regular.

\begin{example}[Exact ring with a rank-one clique]
\label{ex:ring-rank-one}
Consider
\begin{equation}
\begin{aligned}
    f^\star=\min_{(x_1,x_2,x_3,x_4,z)\in[-1,1]^5}\quad
      &f_1(x_1,x_2,z)+f_2(x_2,x_3)+f_3(x_3,x_4)+f_4(x_4,x_1),\\
    f_1=x_1^2+x_2^2+(z-x_1-x_2)^2,\quad
      &f_2=(x_2x_3)^2,\quad f_3=(x_3-x_4)^2,\quad f_4=(x_4x_1)^2,
\end{aligned}
\label{eq:ring-pop}
\end{equation}
with the cliques $\Delta_1=\{x_1,x_2,z\}$, $\Delta_2=\{x_2,x_3\}$, $\Delta_3=\{x_3,x_4\}$, and $\Delta_4=\{x_4,x_1\}$. Assign each box constraint $1-s^2\geq0$ to every clique containing $s$. The cover violates RIP: whichever clique is ordered last shares a different variable with each neighbor, and no other clique contains both. For the selected clique $\Delta_1$, the boundary is $B_{\Delta_1}=\{x_1,x_2\}$ and the interior is $I_{\Delta_1}=\{z\}$.
\end{example}

\emph{Global problem.} The objective is a sum of squares, all of which vanish exactly when $x_1=x_2=z=0$ and $x_3=x_4$. Hence
\begin{equation}
    f^\star=0,\qquad
    X^\star=\{(0,0,t,t,0):t\in[-1,1]\}.
    \label{eq:ring-minimizers}
\end{equation}

\emph{Relaxation.} Each summand is a square of a polynomial of degree at most two supported on a clique, so $L(f)\geq0$ for every feasible functional $L$. The moments of $(0,0,T,T,0)$, with $T$ uniform on $[-1,1]$, are feasible and attain zero. Thus $\rho_2=f^\star=0$. The selected block is $M_2^{\Delta_1}(L)=A_2^{\Delta_1}(0,0,0)$, whose minimal face is a single point-evaluation ray. It satisfies simplicial regularity and has the full-degree locally feasible representation $\delta_{(0,0,0)}$.

To check the rank over the whole optimal face $\mathcal F_2$, take any optimal $L$. The three squares in $f_1$ have value zero, giving $L(s^2)=0$ for $s\in\{x_1,x_2,z\}$. The box localizers give $0\leq L(s^4)\leq L(s^2)=0$, so positive semidefiniteness annihilates the rows of $s$ and $s^2$. The latter row then gives $L(s^2t^2)=0$, annihilating the row of $st$ for every other clique variable $t$. Therefore $M_2^{\Delta_1}(L)=A_2^{\Delta_1}(0,0,0)$ for every $L\in\mathcal F_2$: both the selected block and its boundary have maximum rank one.

At the displayed solution, the other clique measures are the laws of $(0,T)$, $(T,T)$, and $(T,0)$. Their blocks have ranks two and three at orders one and two, respectively. As in Example~\ref{ex:chain-partial-flatness}, their minimal faces contain infinitely many point-evaluation rays, so these blocks are not simplicially regular.

\emph{Extendability.} Corollary~\ref{cor:rank-one-extendability}, the rank-one case of Theorem~\ref{thm:simplicial-face}, gives $\pi_{\Delta_1}(X^\star)=\{(0,0,0)\}$. Theorem~\ref{thm:boundary-certificate} also applies to $\delta_{(0,0,0)}$. Thus the unique extracted atom agrees with every global minimizer.

The next example shows that Theorem~\ref{thm:boundary-certificate} requires rank one only on the boundary. A selected clique of rank two can have multiple extracted atoms, all globally extendable.

\begin{example}[Rank-one boundary with two atoms]
\label{ex:ring-boundary}
In Example~\ref{ex:ring-rank-one}, replace $f_1$ by
\begin{equation}
    \widetilde f_1(x_1,x_2,z)=x_1^2+x_2^2+\Bigl(z^2-\frac14-x_1-x_2\Bigr)^2,
    \label{eq:ring-boundary-objective}
\end{equation}
keeping the constraints, their assignments, the cover, and $f_2$, $f_3$, $f_4$.
\end{example}

\emph{Global problem.} The objective $\widetilde f=\widetilde f_1+f_2+f_3+f_4$ is a sum of squares, all of which vanish exactly when $x_1=x_2=0$, $z^2=1/4$, and $x_3=x_4$. Hence
\begin{equation}
    \widetilde f^\star=0,\qquad
    \widetilde X^\star=\{(0,0,t,t,\pm\tfrac12):t\in[-1,1]\}.
    \label{eq:ring-boundary-minimizers}
\end{equation}

\emph{Relaxation.} Let $L$ be the moments of $(0,0,T,T,S/2)$, where $S$ is a uniform random sign independent of $T$. This feasible functional attains the sum-of-squares lower bound zero, so $\widetilde\rho_2=\widetilde f^\star=0$. Its selected clique measure is
\[
    \mu_{\Delta_1}=\tfrac12\delta_{(0,0,-1/2)}
                       +\tfrac12\delta_{(0,0,1/2)},
\]
which represents all moments through degree four and satisfies the assigned box constraints. The selected block has rank two; the remaining blocks are unchanged from Example~\ref{ex:ring-rank-one}.

For every optimal $L$, $L(x_1^2)=L(x_2^2)=0$. The argument above annihilates all rows containing $x_1$ or $x_2$, so $M_2^{B_{\Delta_1}}(L)=A_2^{B_{\Delta_1}}(0,0)$. The zero square of $z^2-1/4-x_1-x_2$ places its coefficient vector in the kernel, making the row of $z^2$ one quarter of the constant row. Only the rows of $1$ and $z$ remain independent. Hence the maximum ranks over $\mathcal F_2$ are two for the selected clique and one for its boundary.

For the displayed rank-two block, these kernel relations and the moment identities give the minimal face
\[
    \operatorname{cone}\bigl\{
       A_2^{\Delta_1}(0,0,-1/2),\,
       A_2^{\Delta_1}(0,0,1/2)\bigr\}.
\]
Its two generators are linearly independent, establishing simplicial regularity.

\emph{Extendability.} Theorem~\ref{thm:simplicial-face} guarantees at least one globally extendable candidate. The boundary certificate of Theorem~\ref{thm:boundary-certificate} certifies both, as confirmed by $\pi_{\Delta_1}(\widetilde X^\star)=\{(0,0,-1/2),(0,0,1/2)\}$. At the common boundary value $(0,0)$, the interior objective is $(z^2-1/4)^2$, minimized at both extracted values. Rank one of the entire clique block is therefore unnecessary for this certificate.

We next isolate the role of value exactness. The selected clique and its boundary have rank one over the entire optimal face, but the extracted atom is not part of any global minimizer.

\begin{example}[Rank one without value exactness]
\label{ex:gap-rank-one}
Fix $0<\lambda\leq\frac14$ and consider
\begin{equation}
\begin{aligned}
    f_\lambda^\star=\min_{x,y,u,v}\quad
        &f_\lambda(x,y,u,v)=(x-y)^2+\lambda y^2-u^2-v^2\\
    \text{s.t.}\quad
        &x\geq0,\qquad y\geq0,\\
        &1-x^2\geq0,\quad 1-y^2\geq0,\quad
         1-u^2\geq0,\quad 1-v^2\geq0,\\
        &g_{\sigma,\tau}(y,u,v):=
          1+\frac12y-\frac12
          (\sigma u+\tau v+\sigma\tau uv)\geq0,
          \quad (\sigma,\tau)\in\{-1,1\}^2,
\end{aligned}
\label{eq:gap-pop}
\end{equation}
with the cliques $\Delta_1=\{x,y\}$ and $\Delta_2=\{y,u,v\}$ and the local objectives $f_1=(x-y)^2+\lambda y^2$ and $f_2=-u^2-v^2$. Assign $x\geq0$, $y\geq0$, $1-x^2\geq0$, and $1-y^2\geq0$ to $\Delta_1$, and the remaining constraints to $\Delta_2$, together with $3-y^2-u^2-v^2\geq0$. The selected clique is $\Delta_1$, with boundary $B_{\Delta_1}=\{y\}$ and interior $I_{\Delta_1}=\{x\}$. This two-clique cover satisfies RIP.
\end{example}

\emph{Global problem.} Put $a=|u|$ and $b=|v|$. Choosing the signs of $u$ and $v$ in $g_{\sigma,\tau}\geq0$ gives $a+b+ab\leq2+y$. On $[0,1]^2$,
\[
    1+a+b+ab-2(a^2+b^2)
       =2a(1-a)+2b(1-b)+(1-a)(1-b)\geq0.
\]
Thus $u^2+v^2\leq(3+y)/2$, and
\begin{equation}
    f_\lambda\geq\lambda y^2-\frac{3+y}{2}
      =\lambda-2+(1-y)\Bigl(\frac12-\lambda(1+y)\Bigr)\geq\lambda-2,
    \label{eq:gap-lower-bound}
\end{equation}
Equality requires $y=1$, $x=y$, and $u^2=v^2=1$. These points are feasible, so
\begin{equation}
    f_\lambda^\star=\lambda-2,\qquad
    X^\star=\{(1,1,\sigma,\tau):(\sigma,\tau)\in\{-1,1\}^2\}.
    \label{eq:gap-minimizers}
\end{equation}

\emph{Relaxation.} Consider the functional
\begin{equation}
    L(p)=\frac14\sum_{s,t\in\{-1,1\}}p(0,0,s,t)
    \label{eq:gap-functional}
\end{equation}
restricted to the sparse moment indices. Its clique representing measures are
\begin{equation}
    \mu_1=\delta_{(0,0)},\qquad
    \mu_2=\frac14\sum_{s,t\in\{-1,1\}}\delta_{(0,s,t)}.
    \label{eq:gap-local-measures}
\end{equation}
Writing $v_d^\Delta$ for the monomial vector on $\Delta$, its moment and localizing blocks satisfy
\begin{equation}
    M_2^{\Delta_\ell}(L)
      =\int v_2^{\Delta_\ell}\bigl(v_2^{\Delta_\ell}\bigr)^{\!\top}d\mu_\ell\succeq0,
    \qquad
    M_1^{\Delta_\ell}(pL)
      =\int p\,v_1^{\Delta_\ell}\bigl(v_1^{\Delta_\ell}\bigr)^{\!\top}d\mu_\ell.
    \label{eq:gap-matrices}
\end{equation}
The moment blocks are positive semidefinite, and the shared $y$-marginals both equal $\delta_0$. Every assigned inequality other than $g_{\sigma,\tau}$ is nonnegative on its corresponding clique support. For the remaining localizers, put $w_{st}=v_1^{\Delta_2}(0,s,t)=(1,0,s,t)^\top$. Then
\begin{equation}
    M_1^{\Delta_2}(g_{\sigma,\tau}L)
    =\frac32\diag{1,0,1,1}-\frac12w_{\sigma\tau}w_{\sigma\tau}^\top
    =
    \begin{pmatrix}
        1 & 0 & -\sigma/2 & -\tau/2\\
        0 & 0 & 0 & 0\\
        -\sigma/2 & 0 & 1 & -\sigma\tau/2\\
        -\tau/2 & 0 & -\sigma\tau/2 & 1
    \end{pmatrix}\succeq0.
    \label{eq:gap-localizer}
\end{equation}
The displayed matrix has eigenvalues $0,0,3/2,3/2$, proving feasibility. However, each point $(0,s,t)$ violates $g_{s,t}$, so $\mu_2$ is not locally feasible; $\mu_1$ is.

For every feasible $\widetilde L$, the moment and box-localizing constraints give
\[
    \widetilde L(f_\lambda)+2
      =\widetilde L\bigl((x-y)^2\bigr)+\lambda\widetilde L(y^2)
       +\widetilde L(1-u^2)+\widetilde L(1-v^2)\geq0.
\]
Since $L(f_\lambda)=-2$, we obtain $\rho_2=-2=f_\lambda^\star-\lambda$. The relaxation gap is positive and can be arbitrarily small.

At any optimal $L$, all four terms in this lower bound vanish. Because $\lambda>0$, $L(y^2)=L((x-y)^2)=0$, and positive semidefiniteness also gives $L(x^2)=0$. The box localizers propagate these zero relations as in Example~\ref{ex:ring-rank-one}, yielding $M_2^{\Delta_1}(L)=A_2^{\Delta_1}(0,0)$ for every $L\in\mathcal F_2$. Thus the selected clique and its boundary have maximum rank one, the selected block is simplicially regular, and its full-degree locally feasible representation is $\delta_{(0,0)}$.

\emph{Extendability.} All hypotheses of Theorem~\ref{thm:simplicial-face}, Corollary~\ref{cor:rank-one-extendability}, and Theorem~\ref{thm:boundary-certificate} except value exactness hold. Nevertheless, $\pi_{\Delta_1}(X^\star)=\{(1,1)\}$ excludes the extracted atom $(0,0)$. Fixing this atom removes every global minimizer, showing that value exactness cannot be omitted even when the relaxation gap is arbitrarily small.

Finally, we show that the boundary rank-one condition cannot be omitted from the guarantee that every extracted atom is globally extendable. We retain value exactness and simplicial regularity, but obtain both an extendable and a non-extendable atom.

\begin{example}[Value exactness without a rank-one boundary]
\label{ex:exact-rank-two}
Consider
\begin{equation}
\begin{aligned}
    f^\star=\min_{(x,y,z,u,v,w)\in[-1,1]^6}\quad
      &(x-y)^2+\Bigl(z-\frac{x+y}{2}\Bigr)^2-u^2-v^2+(vw)^2+(wx)^2\\
    \text{s.t.}\quad
      &x\geq0,\qquad y\geq0,\qquad y^2-y=0,\\
      &g_{\sigma,\tau}(y,u,v)\geq0,\qquad(\sigma,\tau)\in\{-1,1\}^2,
\end{aligned}
\label{eq:exact-ring-pop}
\end{equation}
with $g_{\sigma,\tau}$ as in~\eqref{eq:gap-pop}, the ring $\Delta_1=\{x,y,z\}$, $\Delta_2=\{y,u,v\}$, $\Delta_3=\{v,w\}$, $\Delta_4=\{w,x\}$, and the local objectives $f_1=(x-y)^2+(z-\frac{x+y}{2})^2$, $f_2=-u^2-v^2$, $f_3=(vw)^2$, and $f_4=(wx)^2$. Assign each box constraint to every clique containing its variable, assign $x\geq0$, $y\geq0$, and $y^2-y=0$ to $\Delta_1$, and assign the four $g_{\sigma,\tau}\geq0$ to $\Delta_2$. The selected clique has boundary $B_{\Delta_1}=\{x,y\}$ and interior $I_{\Delta_1}=\{z\}$. RIP fails as in Example~\ref{ex:ring-rank-one}.
\end{example}

\emph{Global problem.} The identity
\begin{equation}
    f+2=(x-y)^2+\Bigl(z-\frac{x+y}{2}\Bigr)^2+(vw)^2+(wx)^2+(1-u^2)+(1-v^2)
    \label{eq:exact-ring-certificate}
\end{equation}
gives $f\geq-2$ on $K$. Equality requires $u^2=v^2=1$, $x=y=z$, and $w=0$. Then $g_{u,v}(y,u,v)=(y-1)/2\geq0$ and $y\leq1$ force $y=1$. These points are feasible, so
\begin{equation}
    f^\star=-2,\qquad
    X^\star=\{(1,1,1,\sigma,\tau,0):(\sigma,\tau)\in\{-1,1\}^2\}.
    \label{eq:exact-ring-minimizers}
\end{equation}

\emph{Relaxation.} For $\theta\in[0,1]$, let $L_\theta$ be the moments of $(B,B,B,U,V,0)$, where $B$ is Bernoulli with $\Pr(B=1)=\theta$ and $U$, $V$ are uniform random signs, all independent. At $\theta=0$, the $\Delta_2$ measure is $\mu_2$ from~\eqref{eq:gap-local-measures}, whose localizers are positive semidefinite by~\eqref{eq:gap-localizer}; the other clique measures are locally feasible. At $\theta=1$, the functional integrates the uniform measure on $X^\star$. Both are feasible, so $L_\theta=(1-\theta)L_0+\theta L_1$ is feasible and attains $-2$. Together with~\eqref{eq:exact-ring-certificate}, this proves $\rho_2=f^\star=-2$.

For any optimal $L$, the coefficient vectors of $q=x-y$ and $q=z-y$ lie in $\ker M_2^{\Delta_1}(L)$ because the selected squares vanish. For each $s\in\{x,y,z\}$, the box localizer gives $0\leq L((sq)^2)\leq L(q^2)=0$, so the vectors of $sq$ also lie in the kernel. Consequently, the rows of $x$ and $z$ equal the row of $y$, and every quadratic row equals the row of $y^2$. The moment equalities for $y^2-y=0$ identify the latter with the row of $y$. Writing $\theta=L(y)=L(y^2)\in[0,1]$ yields
\begin{equation}
    M_d^{\Delta_1}(L)
      =(1-\theta)\,v_d^{\Delta_1}(0,0,0)\,v_d^{\Delta_1}(0,0,0)^\top
       +\theta\,v_d^{\Delta_1}(1,1,1)\,v_d^{\Delta_1}(1,1,1)^\top,
    \qquad d=1,2.
    \label{eq:exact-ring-face}
\end{equation}
Thus the selected blocks over the optimal face form exactly the segment joining the two point evaluations. For $0<\theta<1$, the same kernel relations and moment identities give the minimal face
\[
    \operatorname{cone}\bigl\{
       A_2^{\Delta_1}(0,0,0),\,
       A_2^{\Delta_1}(1,1,1)\bigr\};
\]
at the endpoints it is a single ray. Every optimal selected block is therefore simplicially regular and has the full-degree locally feasible representation $(1-\theta)\delta_{(0,0,0)}+\theta\delta_{(1,1,1)}$, with zero weights omitted. The maximum ranks over $\mathcal F_2$ are two for both the selected clique and its boundary.

\emph{Extendability.} At $\theta=1/4$, extraction gives
\begin{equation}
    \mu_{\Delta_1}=\tfrac34\delta_{(0,0,0)}+\tfrac14\delta_{(1,1,1)},
    \label{eq:exact-ring-extracted}
\end{equation}
Both atoms minimize the clique objective at their own boundary values, but $\pi_{\Delta_1}(X^\star)=\{(1,1,1)\}$. Hence the atom of larger weight, $(0,0,0)$, is not globally extendable, whereas $(1,1,1)$ extends to all four global minimizers. Theorem~\ref{thm:simplicial-face} still guarantees at least one extendable candidate because the selected block has maximum rank and satisfies simplicial regularity. The boundary certificate cannot guarantee every atom here: its rank-one hypothesis fails, and fixing the dominant atom excludes every global minimizer.

\section{Progressive extraction for a unicycle trajectory problem}
\label{sec:unicycle}

In this section, we apply the boundary certificate to progressive extraction, give an auxiliary SDP for testing rank one over the optimal face, and evaluate the method on a unicycle trajectory problem.

\subsection{Progressive extraction and fixing}
\label{subsec:progressive-extraction}

Let $\xi$ denote the variables of a POP of the form~\eqref{eq:pop}. At round $r$, the coordinates indexed by $J_r$ have been fixed to $c_r$, with $J_0=\varnothing$. Substituting $\xi_{J_r}=c_r$ gives a reduced POP, to which we apply the following steps.
\begin{enumerate}
    \item \emph{Solve.} Obtain an optimal moment solution $y^{(r)}$ of the reduced sparse relaxation at order $\kappa$.
    \item \emph{Extract.} For a selected clique $\Delta$, use simplicial regularity to obtain atomic candidates. 
    \item \emph{Test.} Compute the boundary $B_\Delta^{(r)}$ from the current clique cover and screen for
\begin{equation}
        \operatorname{rank}M_\kappa^{B_\Delta^{(r)}}(y^{(r)})=1,
        \label{eq:unicycle-extraction-test}
    \end{equation}
    Then verify rank one over the optimal face using the auxiliary test.
    \item \emph{Fix.} Choose an extracted atom $a$ and a nonempty set $A_r\subseteq\Delta$ of remaining coordinates. Fix $\xi_{A_r}=a_{A_r}$, update $J_{r+1}=J_r\cup A_r$ and the assigned values $c_{r+1}$, and repeat on the reduced problem.
\end{enumerate}
When the current relaxation is value exact, the optimal-face boundary condition allow any extracted atom to be fixed while preserving a global minimizer of the reduced POP (Theorem~\ref{thm:boundary-certificate}). If no clique is accepted, the numerical procedure uses a heuristic fallback.

\subsection{Testing rank one over the optimal face}
\label{subsec:optimal-face-test}

Rank one at the returned solution does not establish the condition~\eqref{eq:boundary-rank-one-condition} over the whole optimal face. We therefore test the largest possible deviation from its first-moment values. All quantities below refer to the current reduced problem, and we suppress the round index. Let $\mathcal S_\kappa$, $\rho_\kappa$, and $\mathcal F_\kappa$ denote its feasible moment set, optimal value, and optimal face, respectively, as in Section~\ref{subsec:rank-one-extendability}.

For a nonempty set $J$ contained in a clique and $1\leq t\leq\kappa$, choose $\bar y\in\mathcal F_\kappa$, set $c_j=L_{\bar y}(\xi_j)$ for $j\in J$, and define
\begin{equation}
    \Phi_{J,t}(\xi;c)=\sum_{\alpha\in\mathbb N_t^J\setminus\{0\}}(\xi^\alpha-c^\alpha)^2,
    \qquad c^\alpha=\prod_{j\in J}c_j^{\alpha_j}.
    \label{eq:deviation-polynomial}
\end{equation}
Positive semidefiniteness of $M_t^J(y)$ gives $L_y(\Phi_{J,t})\geq0$. We maximize this deviation over the optimal face:
\begin{equation}
\begin{aligned}
    \Theta_{\kappa,t}^J(c)=\sup_y\quad &L_y\bigl(\Phi_{J,t}(\cdot\,;c)\bigr)\\
    \text{s.t.}\quad &y\in\mathcal S_\kappa,\\
    &L_y(f)=\rho_\kappa.
\end{aligned}
\label{eq:rank-one-test-sdp}
\end{equation}
This retains the original relaxation constraints, changes the objective, and adds one scalar equality. 

\begin{proposition}[An optimal-face rank-one test]
\label{prop:rank-one-test}
Let $\bar y\in\mathcal F_\kappa$ and define $c$ and $\Theta_{\kappa,t}^J(c)$ as above. Then
\[
    \Theta_{\kappa,t}^J(c)=0
    \quad\Longleftrightarrow\quad
    \max_{y\in\mathcal F_\kappa}\operatorname{rank}M_t^J(y)=1.
\]
In this case, every optimal block is the same point-evaluation matrix:
\[
    M_t^J(y)=v_t^J(c)\,v_t^J(c)^\top\qquad(y\in\mathcal F_\kappa).
\]
The equivalence does not require value exactness.
\end{proposition}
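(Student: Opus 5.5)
We prove both directions by writing $L_y(\Phi_{J,t})$ as a quadratic form in the moment block. Fix $y\in\mathcal F_\kappa$ and put $v=v_t^J(c)$, so $v_0=1$ and $v_\alpha=c^\alpha$. The key identity is
\[
    L_y\bigl(\Phi_{J,t}(\cdot\,;c)\bigr)
      =\sum_{\alpha\ne0}\bigl(e_\alpha-c^\alpha e_0\bigr)^{\!\top}M_t^J(y)\bigl(e_\alpha-c^\alpha e_0\bigr),
\]
which holds because $(\xi^\alpha-c^\alpha)^2$ has degree at most $2t\leq2\kappa$ and is supported in a clique. Each summand is nonnegative by positive semidefiniteness. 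The vectors $e_\alpha-c^\alpha e_0$, $\alpha\ne0$, form a basis of the subspace $v^\perp$ orthogonal to $v$ in the weighted sense: a vector $q$ satisfies $q_0=-\sum_{\alpha\ne0}c^\alpha q_\alpha$ exactly when $q\in\operatorname{span}\{e_\alpha-c^\alpha e_0\}$. This span has dimension $N-1$, where $N$ is the size of the block, and it equals the annihilator of $v$ under the pairing $q\mapsto\sum_\alpha q_\alpha v_\alpha$.

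\emph{($\Leftarrow$).} Suppose the maximum rank over $\mathcal F_\kappa$ is one. Then Lemma~\ref{lem:common-range} (which, as noted there, applies to $M_t^J$) and the argument of Corollary~\ref{cor:rank-one-extendability} give a common block for every optimal $y$. That argument uses only the factorization $ww^\top$ with $w_0=1$ and the Hankel identities, and the range argument, so value exactness is not needed. Hence every optimal block equals $A_t^J(c')$ for a single $c'$. Taking $y=\bar y$, its first moments give $c'=c$. Each summand above is then $\bigl(c^\alpha-c^\alpha\bigr)^2=0$, so $\Theta=0$.

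\emph{($\Rightarrow$).} Suppose $\Theta=0$. Then for every $y\in\mathcal F_\kappa$ all summands vanish. Since $M=M_t^J(y)\succeq0$, the condition $q^\top Mq=0$ forces $Mq=0$, so $\ker M\supseteq\operatorname{span}\{e_\alpha-c^\alpha e_0\}$, a subspace of dimension $N-1$. Hence $\operatorname{rank}M\le1$, with range orthogonal to that subspace. The orthogonal complement is spanned by $v$, since $v^\top(e_\alpha-c^\alpha e_0)=c^\alpha-c^\alpha=0$. Therefore $M=s\,vv^\top$, and $M_{00}=1$ together with $v_0=1$ gives $s=1$. This shows $M=v_t^J(c)v_t^J(c)^\top$, which has rank exactly one, and so the maximum rank is one. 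Every optimal block is this same point-evaluation matrix.

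The main subtlety is the ($\Leftarrow$) direction. Rank one at each individual $y$ does not by itself give a common center $c$. The pooling argument of Lemma~\ref{lem:common-range} is what supplies a common range and hence a common $c$, and one must check that it uses neither value exactness nor $t=\kappa$. Throughout, the objective only uses $\mathcal F_\kappa$ and positive semidefiniteness, so value exactness is never invoked.
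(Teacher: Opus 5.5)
Your proof is correct and follows the paper's route. When $\Theta=0$, you use positive semidefiniteness to put every $e_\alpha-c^\alpha e_0$ in the kernel of each optimal block; the paper says the same thing as each column being $c^\alpha$ times the constant column. For the converse, you use the common-range lemma, normalization, and the Hankel identities exactly as the paper does. Your explicit quadratic-form identity and your check that the borrowed corollary argument needs neither value exactness nor $t=\kappa$ are useful additions but do not change the argument.
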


\begin{proof}
If $\Theta_{\kappa,t}^J(c)=0$, every squared term in~\eqref{eq:deviation-polynomial} has zero pseudo-expectation for every $y\in\mathcal F_\kappa$. Positive semidefiniteness gives
\[
    M_t^J(y)(e_\alpha-c^\alpha e_0)=0,
    \qquad \alpha\in\mathbb N_t^J\setminus\{0\},
\]
where $e_\alpha$ is the coordinate vector for $\xi^\alpha$. Thus every column is a prescribed multiple of the constant column, and $y_0=1$ forces $M_t^J(y)=v_t^J(c)v_t^J(c)^\top$.

Conversely, if the maximum rank is one, Lemma~\ref{lem:common-range} gives a common one-dimensional range for all optimal blocks. Normalization makes the blocks identical, and the Hankel identities identify them with the point-evaluation matrix at $c$. Every squared deviation therefore has zero pseudo-expectation.
\end{proof}

For the boundary certificate, take $J=B_\Delta$ and $t=\kappa$. If the returned boundary block already has rank greater than one, the condition fails without an auxiliary solve. 
Otherwise, a numerical zero value in~\eqref{eq:rank-one-test-sdp} verifies the condition over the whole optimal face.

\subsection{Unicycle trajectory optimization}
\label{subsec:unicycle-results}

\paragraph{Problem and implementation.}
A planar unicycle travels toward a goal over $N$ steps of length $h=0.1$ while avoiding three circular obstacles. Representing the heading on the unit circle makes the discretized dynamics polynomial. The natural chain cover is $\Delta_k=\{s_{k-1},u_{k-1},s_k,q_k\}$, $k\in[N]$, where $s_k$ contains the position and heading, $u_k=(v_k,\omega_k)$ is the control, and $q_k$ is a heading increment. The formulation and experimental setup are given in Appendices~\ref{app:unicycle-formulation} and~\ref{app:unicycle-setup}.

Each round solves the order-two relaxation of the reduced problem. We fix controls directly and substitute any increments and states determined by these fixings. The boundary of $\Delta_k$ contains the states among $s_{k-1}$ and $s_k$ shared with other current cliques. 

We examine at most three screened cliques per round, in time-step order. For a nonempty boundary, we solve~\eqref{eq:rank-one-test-sdp} with $J=B_\Delta$ and $t=\kappa$, accepting the clique only if the computed value is at most $10^{-2}$.
If no candidate is accepted, the earliest unfixed control pair is fixed to its clique's dominant atom as a fallback, meaning the atom of largest weight. 

\paragraph{Evaluation.}
We compare progressive extraction with one-shot extraction, which solves the full order-two relaxation once and extracts one value per variable using SPOT. Each recovered trajectory $\widehat\xi$ is evaluated in the original POP. With the order-three lower bound $\rho_3\leq f^\star$, we report the relative suboptimality gap
\begin{equation}
    \eta=\frac{f(\widehat\xi)-\rho_3}{1+|f(\widehat\xi)|+|\rho_3|}.
    \label{eq:unicycle-gap}
\end{equation}
For a valid lower bound, a negative gap indicates infeasibility. We also report the largest equality residual, the largest inequality violation, and the number of steps penetrating an obstacle safety margin. Table~\ref{tab:unicycle-results} summarizes these metrics for $N\in\{40,60,80\}$, and Fig.~\ref{fig:unicycle} shows the trajectories.

\begin{table}[h]
\centering
\small
\caption{Comparison of one-shot and progressive extraction on the unicycle problem, both at relaxation order two on the chain cover. $\eta$: gap~\eqref{eq:unicycle-gap} to the order-three bound $\rho_3$. eq.\ and ineq.: largest equality residual and inequality violation. obst.: number of steps at which the safety margin of an obstacle is penetrated by more than $1$\,mm.}
\label{tab:unicycle-results}
\begin{tabular}{@{}rlrrrr@{}}
\toprule
$N$ & method & $\eta$ [\%] & eq. & ineq. & obst.\\
\midrule
40 & one-shot    & $0.00$  & $5.8\times10^{-5}$  & $3.2\times10^{-4}$ & 0 \\
   & progressive & $0.00$  & $6.7\times10^{-16}$ & $2.0\times10^{-5}$ & 0 \\
\midrule
60 & one-shot    & $-1.55$ & $3.0\times10^{-1}$  & $8.1\times10^{-1}$ & 34\\
   & progressive & $0.22$  & $6.7\times10^{-16}$ & $4.4\times10^{-5}$ & 0 \\
\midrule
80 & one-shot    & $-1.63$ & $2.8\times10^{-1}$  & $8.6\times10^{-1}$ & 34\\
   & progressive & $0.26$  & $2.2\times10^{-15}$ & $1.4\times10^{-4}$ & 0 \\
\bottomrule
\end{tabular}
\end{table}

\begin{figure}[h]
\centering
\includegraphics[width=\textwidth]{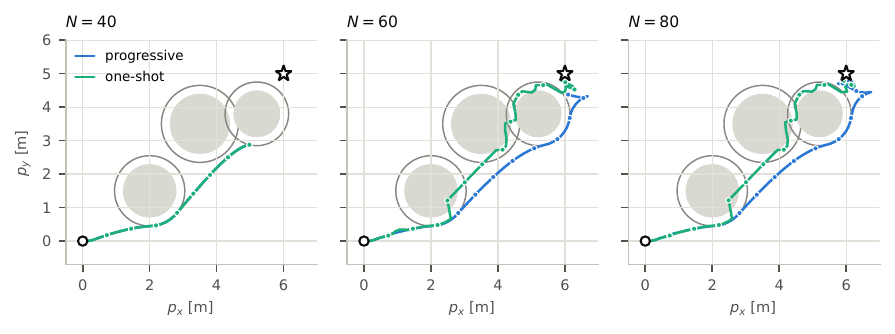}
\caption{Trajectories recovered by progressive and one-shot extraction for $N=40$, $60$, and $80$. Shaded disks are the obstacles, rings mark the safety margin, and dots are placed every five steps.}
\label{fig:unicycle}
\end{figure}

Progressive extraction successfully recovers almost optimal trajectories that satisfy the constraints. At longer horizons, one-shot extraction produces infeasible trajectories, while progressive fixing maintains the quality of the recovered solutions.

\section{Conclusion}
\label{sec:conclusion}

We established sufficient conditions for extending local atomic candidates to global minimizers in value-exact sparse moment relaxations. 
Under value exactness, simplicial regularity of a block with maximum rank over the optimal face guarantees at least one globally extendable candidate.
A full-degree locally feasible clique representation and a boundary block of rank one over that face guarantee extendability of the extracted atom.
An auxiliary SDP tests the boundary rank condition and guides progressive extraction. On a unicycle trajectory problem, progressive extraction improves feasibility over one-shot extraction while recovering near-optimal trajectories.

\section*{Acknowledgements}
I thank Heng Yang for helpful discussions on practical tests of the rank-one condition and on the examples, and Shucheng Kang for insightful discussions on simplicial singularity.

\bibliographystyle{plainnat}
\bibliography{references}

\begin{thebibliography}{23}
\providecommand{\natexlab}[1]{#1}
\providecommand{\url}[1]{\texttt{#1}}
\expandafter\ifx\csname urlstyle\endcsname\relax
  \providecommand{\doi}[1]{doi: #1}\else
  \providecommand{\doi}{doi: \begingroup \urlstyle{rm}\Url}\fi

\bibitem[Curto and Fialkow(2005)]{curto2005truncated}
Ra{\'u}l~E. Curto and Lawrence~A. Fialkow.
\newblock Truncated {$K$}-moment problems in several variables.
\newblock \emph{Journal of Operator Theory}, 54\penalty0 (1):\penalty0
  189--226, 2005.

\bibitem[Fantuzzi and Fuentes(2025)]{fantuzzi2025finite}
Giovanni Fantuzzi and Federico Fuentes.
\newblock Finite convergence and minimizer extraction in moment relaxations
  with correlative sparsity.
\newblock \emph{arXiv preprint arXiv:2502.01410}, 2025.

\bibitem[Henrion and Lasserre(2005)]{henrion2005}
Didier Henrion and Jean~B. Lasserre.
\newblock Detecting global optimality and extracting solutions in {GloptiPoly}.
\newblock In Didier Henrion and Andrea Garulli, editors, \emph{Positive
  Polynomials in Control}, volume 312 of \emph{Lecture Notes in Control and
  Information Sciences}, pages 293--310. Springer, 2005.

\bibitem[Kang and Yang(2026)]{kang2026simplicial}
Shucheng Kang and Heng Yang.
\newblock Simplicial regularizability of the pseudo-moment cone and
  {Carath\'eodory}-type atomic decomposition of moment matrices.
\newblock \emph{arXiv preprint arXiv:2605.06854}, 2026.

\bibitem[Kang et~al.(2024)Kang, Xu, Sarva, Liang, and Yang]{kang2024strom}
Shucheng Kang, Xiaoyang Xu, Jay Sarva, Ling Liang, and Heng Yang.
\newblock Fast and certifiable trajectory optimization.
\newblock \emph{arXiv preprint arXiv:2406.05846}, 2024.

\bibitem[Kang et~al.(2025)Kang, Liu, and Yang]{kang2025spot}
Shucheng Kang, Guorui Liu, and Heng Yang.
\newblock Global contact-rich planning with sparsity-rich semidefinite
  relaxations.
\newblock \emph{arXiv preprint arXiv:2502.02829}, 2025.

\bibitem[Klep et~al.(2018)Klep, Povh, and Vol{\v{c}}i{\v{c}}]{klep2018}
Igor Klep, Janez Povh, and Jurij Vol{\v{c}}i{\v{c}}.
\newblock Minimizer extraction in polynomial optimization is robust.
\newblock \emph{SIAM Journal on Optimization}, 28\penalty0 (4):\penalty0
  3177--3207, 2018.

\bibitem[Lasserre(2001)]{lasserre2001}
Jean~B. Lasserre.
\newblock Global optimization with polynomials and the problem of moments.
\newblock \emph{SIAM Journal on Optimization}, 11\penalty0 (3):\penalty0
  796--817, 2001.

\bibitem[Lasserre(2006)]{lasserre2006sparse}
Jean~B. Lasserre.
\newblock Convergent {SDP}-relaxations in polynomial optimization with
  sparsity.
\newblock \emph{SIAM Journal on Optimization}, 17\penalty0 (3):\penalty0
  822--843, 2006.

\bibitem[Lasserre et~al.(2008)Lasserre, Henrion, Prieur, and
  Tr{\'e}lat]{lasserre2007control}
Jean~B. Lasserre, Didier Henrion, Christophe Prieur, and Emmanuel Tr{\'e}lat.
\newblock Nonlinear optimal control via occupation measures and
  {LMI}-relaxations.
\newblock \emph{SIAM Journal on Control and Optimization}, 47\penalty0
  (4):\penalty0 1643--1666, 2008.

\bibitem[Magron and Wang(2021)]{magron2021tssos}
Victor Magron and Jie Wang.
\newblock {TSSOS}: a {Julia} library to exploit sparsity for large-scale
  polynomial optimization.
\newblock \emph{arXiv preprint arXiv:2103.00915}, 2021.

\bibitem[{MOSEK ApS}(2025)]{mosek}
{MOSEK ApS}.
\newblock \emph{{MOSEK} Optimizer {API} for {Python}, version 11.1}, 2025.
\newblock \url{https://www.mosek.com}.

\bibitem[Nie(2013)]{nie2013flat}
Jiawang Nie.
\newblock Certifying convergence of {Lasserre's} hierarchy via flat truncation.
\newblock \emph{Mathematical Programming}, 142\penalty0 (1--2):\penalty0
  485--510, 2013.

\bibitem[Nie et~al.(2026)Nie, Qu, Tang, and Zhang]{nie2026characterization}
Jiawang Nie, Zheng Qu, Xindong Tang, and Linghao Zhang.
\newblock A characterization for tightness of the sparse {Moment-SOS}
  hierarchy.
\newblock \emph{Mathematical Programming}, 215\penalty0 (1--2):\penalty0
  369--405, 2026.

\bibitem[Papachristodoulou and Prajna(2002)]{papachristodoulou2002lyapunov}
Antonis Papachristodoulou and Stephen Prajna.
\newblock On the construction of {Lyapunov} functions using the sum of squares
  decomposition.
\newblock In \emph{Proceedings of the 41st {IEEE} Conference on Decision and
  Control}, volume~3, pages 3482--3487, 2002.

\bibitem[Parrilo(2003)]{parrilo2003semidefinite}
Pablo~A. Parrilo.
\newblock Semidefinite programming relaxations for semialgebraic problems.
\newblock \emph{Mathematical Programming}, 96\penalty0 (2):\penalty0 293--320,
  2003.

\bibitem[Waki et~al.(2006)Waki, Kim, Kojima, and Muramatsu]{waki2005}
Hayato Waki, Sunyoung Kim, Masakazu Kojima, and Masakazu Muramatsu.
\newblock Sums of squares and semidefinite program relaxations for polynomial
  optimization problems with structured sparsity.
\newblock \emph{SIAM Journal on Optimization}, 17\penalty0 (1):\penalty0
  218--242, 2006.

\bibitem[Wang et~al.(2021{\natexlab{a}})Wang, Magron, and
  Lasserre]{wang2021chordal}
Jie Wang, Victor Magron, and Jean~B. Lasserre.
\newblock Chordal-{TSSOS}: A moment-{SOS} hierarchy that exploits term sparsity
  with chordal extension.
\newblock \emph{SIAM Journal on Optimization}, 31\penalty0 (1):\penalty0
  114--141, 2021{\natexlab{a}}.

\bibitem[Wang et~al.(2021{\natexlab{b}})Wang, Magron, and
  Lasserre]{wang2021tssos}
Jie Wang, Victor Magron, and Jean~B. Lasserre.
\newblock {TSSOS}: A moment-{SOS} hierarchy that exploits term sparsity.
\newblock \emph{SIAM Journal on Optimization}, 31\penalty0 (1):\penalty0
  30--58, 2021{\natexlab{b}}.

\bibitem[Wang et~al.(2022)Wang, Magron, Lasserre, and Mai]{wang2022cstssos}
Jie Wang, Victor Magron, Jean~B. Lasserre, and Ngoc Hoang~Anh Mai.
\newblock {CS-TSSOS}: Correlative and term sparsity for large-scale polynomial
  optimization.
\newblock \emph{ACM Transactions on Mathematical Software}, 48\penalty0
  (4):\penalty0 1--26, 2022.

\bibitem[Yang(2024{\natexlab{a}})]{yang2024moment}
Heng Yang.
\newblock {Semidefinite Optimization and Relaxation}, chapter 5: {Moment
  Relaxation}.
\newblock Online lecture notes,
  \url{https://hankyang.seas.harvard.edu/Semidefinite/Moment.html},
  2024{\natexlab{a}}.

\bibitem[Yang(2024{\natexlab{b}})]{yang2024structure}
Heng Yang.
\newblock {Semidefinite Optimization and Relaxation}, chapter 6: {Exploit
  Structure}.
\newblock Online lecture notes,
  \url{https://hankyang.seas.harvard.edu/Semidefinite/Structure.html},
  2024{\natexlab{b}}.

\bibitem[Yang and Carlone(2020)]{yang2020one}
Heng Yang and Luca Carlone.
\newblock One ring to rule them all: Certifiably robust geometric perception
  with outliers.
\newblock In \emph{Advances in Neural Information Processing Systems},
  volume~33, 2020.

\end{thebibliography}

\appendix
\section{Related work}
\label{app:related-work}

\paragraph{Polynomial optimization and the Moment--SOS hierarchy.}
Polynomial optimization provides a framework for nonconvex problems with polynomial objectives and constraints. 
Polynomial and sum-of-squares (SOS) methods have found applications in Lyapunov stability analysis~\citep{papachristodoulou2002lyapunov}, certifiable geometric perception~\citep{yang2020one}, and nonlinear optimal control through occupation measures~\citep{lasserre2007control}. 
Sparse moment relaxations have also enabled certifiable trajectory optimization~\citep{kang2024strom}. The Moment--SOS hierarchy, developed through the moment approach of \citet{lasserre2001} and the SOS formulations of \citet{parrilo2003semidefinite}, constructs a sequence of semidefinite programming relaxations of increasing order. 
The relaxation values form nondecreasing lower bounds on the global minimum and converge to it under an Archimedean assumption on the constraint description~\citep{lasserre2001}.

\paragraph{Solution extraction from moment relaxations.}
The flat-extension theory of \citet{curto2005truncated} connects rank stability of moment matrices with finitely atomic representing measures. 
Building on this theory, \citet{nie2013flat} studied flat truncation as a certificate for convergence and minimizer extraction. 
For an optimal moment sequence, the condition $\operatorname{rank} M_t(y)=\operatorname{rank} M_{t-d_c}(y)$ at a suitable truncation order $t$ certifies a finitely atomic measure supported on the feasible set.
Every atom is then a global minimizer. These atoms can be computed using multiplication matrices and eigenvalue decompositions, as implemented in GloptiPoly~\citep{henrion2005}. More recently, \citet{kang2026simplicial} proposed an atomic decomposition method based on simplicial regularizability of the pseudo-moment cone. We use the corresponding point-evaluation facial condition in Definition~\ref{def:simplicial-face} for local extraction, without requiring flatness. The resulting candidates must still be checked against the assigned clique constraints before their decomposition is called locally feasible.

\paragraph{Sparse Moment--SOS relaxations.}
Sparsity reduces the computational cost of the hierarchy. \emph{Correlative sparsity} exploits interactions among variables, replacing the dense moment matrix by smaller blocks associated with variable cliques~\citep{waki2005}. 
Under clique-wise Archimedean assumptions and the running intersection property (RIP), the sparse hierarchy retains asymptotic convergence to the global minimum~\citep{lasserre2006sparse}. 
\emph{Term sparsity} exploits the monomial supports of the input polynomials to construct smaller semidefinite blocks~\citep{wang2021tssos,wang2021chordal}. 
CS-TSSOS framework~\citep{wang2022cstssos} combines two forms of the sparsity to utilize the clique structure and further reduce the computational cost.

\paragraph{Solution extraction from sparse moment relaxations.}
For sparse moment relaxations, extracting global minimizers requires compatibility. 
\citet{lasserre2006sparse} established a sufficient condition combining flatness of every clique moment matrix with rank-one moment matrices on pairwise overlap. 
Subsequent results allow higher-rank overlaps. \citet{nie2026characterization} give sufficient conditions under RIP requiring flat clique truncations, a common rank across clique moment matrices, and flat overlap moment matrices with that same rank. 
\citet{fantuzzi2025finite} establish a sparse flat-extension theorem and a constructive extraction procedure under RIP and flatness conditions on every clique and the separators associated with an RIP ordering, without requiring equal clique ranks. 

\section{Unicycle trajectory problem}
\label{app:unicycle}

We give the polynomial formulation of the unicycle problem of §\ref{sec:unicycle} (Appendix~\ref{app:unicycle-formulation}) and the setup of the experiments (Appendix~\ref{app:unicycle-setup}) in this section.

\subsection{Polynomial formulation}
\label{app:unicycle-formulation}

A planar unicycle is steered over $N$ steps of length $h$ from a fixed initial state toward a goal while avoiding three circular obstacles (Fig.~\ref{fig:unicycle}). Following the lifting used for trajectory optimization with moment relaxations~\citep{kang2024strom,kang2025spot}, the heading is represented by a point on the unit circle rather than by an angle, which makes the discretized dynamics polynomial.

\paragraph{Variables.}
The state at step $k$ is $s_k=(p_k,r_k)$, where $p_k\in\mathbb R^2$ is the position and $r_k=(r_{c,k},r_{s,k})=(\cos\theta_k,\sin\theta_k)$ the heading, for $k=0,\ldots,N$. The control at step $k$ is $u_k=(v_k,\omega_k)$, a forward speed and a turn rate, for $k=0,\ldots,N-1$. The heading change over one step is carried by an auxiliary unit vector $q_k=(q_{c,k},q_{s,k})=(\cos\phi_k,\sin\phi_k)$, $k=1,\ldots,N$. The variable $\xi$ collects all states, controls, and increments; the initial state $s_0$ is fixed.

\paragraph{Dynamics.}
Discretizing the unicycle kinematics on the rotation group gives, for $k=0,\ldots,N-1$,
\begin{equation}
\begin{aligned}
    p_{x,k+1}&=p_{x,k}+h\,v_k\,r_{c,k},
    &\qquad
    r_{c,k+1}&=r_{c,k}\,q_{c,k+1}-r_{s,k}\,q_{s,k+1},\\
    p_{y,k+1}&=p_{y,k}+h\,v_k\,r_{s,k},
    &\qquad
    r_{s,k+1}&=r_{s,k}\,q_{c,k+1}+r_{c,k}\,q_{s,k+1},\\
    q_{s,k+1}&=h\,\omega_k,
\end{aligned}
\label{eq:unicycle-polynomial-dynamics}
\end{equation}
together with the unit-circle constraints $r_{c,k}^2+r_{s,k}^2=1$ and $q_{c,k}^2+q_{s,k}^2=1$. The inequality $q_{c,k}\geq q_{\min}$, with $q_{\min}>0$, selects the positive cosine branch and bounds the heading change per step.

\paragraph{Constraints and objective.}
The speed and the turn rate are bounded by $v_k^2\leq v_{\max}^2$ and $\omega_k^2\leq\omega_{\max}^2$, the positions lie in a box, and the obstacles with centers $c_j$ and radii $\varrho_j$ are avoided with a safety margin $\delta$,
\begin{equation}
    \|p_k-c_j\|^2\geq(\varrho_j+\delta)^2,
    \qquad j=1,2,3,\quad k=0,\ldots,N.
    \label{eq:unicycle-obstacles}
\end{equation}
The goal position $p_{\mathrm g}$ and heading $r_{\mathrm g}$ enter through a terminal and a running cost,
\begin{equation}
    f(\xi)=
    4\,\|p_N-p_{\mathrm{g}}\|^2+2\,\|r_N-r_{\mathrm{g}}\|^2
    +\sum_{k=1}^{N}\bigl(\|p_k-p_{\mathrm{g}}\|^2+\|r_k-r_{\mathrm{g}}\|^2\bigr),
    \label{eq:unicycle-objective}
\end{equation}
and there is no control penalty.

\paragraph{Sparsity.}
Every constraint couples the variables of at most two consecutive steps, and every term of $f$ involves a single step. The chain cover of Example~\ref{ex:chain} therefore becomes
\begin{equation}
    \Delta_k=\{s_{k-1},u_{k-1},s_k,q_k\},\qquad k\in[N],
    \label{eq:unicycle-chain-cover}
\end{equation}
and consecutive cliques overlap in $s_k$, so RIP holds. In the terminology of Definition~\ref{def:boundary}, $u_{k-1}$ and $q_k$ are interior variables of $\Delta_k$, and, for $1<k<N$, the states $s_{k-1}$ and $s_k$ form its boundary. All constraints have degree at most two, and the experiments use relaxation order $\kappa=2$.

\subsection{Experimental setup}
\label{app:unicycle-setup}

\paragraph{Relaxations.}
All relaxations are solved with MOSEK~\citep{mosek} through SPOT~\citep{kang2025spot}, with positions and controls scaled by their bounds to $[-1,1]$. 
The one-shot relaxation and every round of the progressive procedure are of order two on the chain cover~\eqref{eq:unicycle-chain-cover}.
Before each round, the fixed controls and the states they determine are substituted as constants, and the cliques are restricted to the remaining variables. The bound $\rho_3$ is the value of the order-three relaxation of the original problem.

\paragraph{Selection and propagation.}
On the chain cover, the control $u_{k-1}$ lies in exactly one clique, $\Delta_k$. Each round fixes the control of the earliest unfixed step whose clique passes the screening test~\eqref{eq:unicycle-extraction-test} and the numerical optimal-face test, or of the earliest unfixed step if no candidate is accepted, to the coordinates of the dominant atom of that clique. The increment and the next state then follow from $q_{s,k}=h\,\omega_{k-1}$, $q_{c,k}=\sqrt{1-h^2\omega_{k-1}^2}$ and the dynamics~\eqref{eq:unicycle-polynomial-dynamics}. The recovered trajectory is the forward propagation of the fixed controls.

\end{document}